\newcommand{\setN}{\ensuremath{\mathbb{N}}}
\newcommand{\setZ}{\ensuremath{\mathbb{Z}}}
\newcommand{\setQ}{\ensuremath{\mathbb{Q}}}
\newcommand{\setR}{\ensuremath{\mathbb{R}}}
\newcommand{\setRext}{\ensuremath{\setR\cup\left\{\infty\right\}}}
\newcommand{\setF}{\ensuremath{\mathcal{F}}}
\newcommand{\setRscript}{\ensuremath{\mathcal{R}}}
\newcommand{\abs}[1]{\ensuremath{\left|#1\right|}}
\newcommand{\val}[1]{\ensuremath{\left\|#1\right\|}}
\newcommand{\genval}[3]{\ensuremath{\left\|#1\right\|_{\left(#2, #3\right)}}}
\newcommand{\gamval}[2]{\genval{#1}{\Gamma}{#2}}
\newcommand{\unval}[2]{\genval{#1}{u}{#2}}
\newcommand{\wkval}[2]{\genval{#1}{w}{#2}}
\newcommand{\Top}[1]{\ensuremath{\tau_{#1}}}
\newcommand{\topv}{\Top{v}}
\newcommand{\topu}{\Top{u}}
\newcommand{\topg}{\Top{\Gam}}
\newcommand{\topw}{\Top{w}}
\newcommand{\Gam}{\ensuremath{\Gamma}}
\newcommand{\Gamn}[1]{\ensuremath{\Gamma_{#1}}}
\newcommand{\gam}{\ensuremath{\gamma}}
\newcommand{\pb}[3]{\ensuremath{\PB_{#1}\left(#2, #3\right)}}
\newcommand{\pbg}[2]{\pb{\Gam}{#1}{#2}}
\newcommand{\pbu}[2]{\pb{u}{#1}{#2}}
\newcommand{\dg}[2]{\ensuremath{\Delta_{\Gamma}\left(#1, #2\right)}}
\newcommand{\Dg}{\ensuremath{\Delta_{\Gamma}}}
\newcommand{\Dw}{\ensuremath{\Delta_{w}}}
\newcommand{\du}[2]{\ensuremath{\Delta_{u}\left(#1, #2\right)}}
\newcommand{\Du}{\ensuremath{\Delta_{u}}}
\newtheorem{prp}{Proposition}[section]
\newtheorem{thm}[prp]{Theorem}
\newtheorem{dfn}[prp]{Definition}
\newtheorem{xpl}[prp]{Example}
\newtheorem{lmm}[prp]{Lemma}
\newtheorem{rmk}[prp]{Remark}
\newtheorem{crl}[prp]{Corollary}
\DeclareMathOperator{\supp}{supp}
\DeclareMathOperator{\PB}{PB}
\begin{document}
\title[Topologies on the Hahn field and convergence of power series]{On the topological structure of the Hahn field and convergence of power series}
\author{Darren Flynn}
\address{Department of Physics and Astronomy, University of Manitoba, Winnipeg, Manitoba
R3T 2N2, Canada}
\email{flynnd3@myumanitoba.ca}
\author{Khodr Shamseddine}
\address{Department of Physics and Astronomy, University of Manitoba, Winnipeg, Manitoba
R3T 2N2, Canada}
\email{Khodr.Shamseddine@umanitoba.ca}
\subjclass[2010]{26E30, 12J25, 12J99, 11D88, 46S10,  57N17, 40A05}
\keywords{non-Archimedean analysis, Hahn field, order topology, valuation topology, vector topology, power series, convergence criteria}
\begin{abstract}
In this paper, we study the topological structure of the Hahn field $\mathcal{F}$ whose elements are functions from the additive abelian group of rational numbers $\setQ$ to the real numbers field $\setR$, with well-ordered support. After reviewing the algebraic and order structures of the field $\setF$, we introduce different vector topologies on $\mathcal{F}$ that are induced by families of semi-norms and all of which are weaker than the order or valuation topology. We compare those vector topologies and we identify the weakest one which we denote by $\topw$ and whose properties are similar to those of the weak topology on the Levi-Civita field \cite{rstopology09}. In particular, we state and prove a convergence criterion for power series in $\left(\mathcal{F},\topw\right)$ that is similar to that for power series on the Levi-Civita field in its weak topology \cite{sham}.
\end{abstract}
\maketitle

	\section{Introduction}
	Functions on non-Archimedean fields often display properties that appear very different from those of real-valued functions on the real field $\setR$. In particular it is possible to construct continuous functions that are not bounded on a closed interval, continuous and bounded functions that attain neither a maximum nor a minimum value on closed intervals, and continuous and differentiable functions with a derivative equal to zero everywhere on their domain which are nevertheless non-constant \cite{rsrevitaly13}. These unusual properties are a result of the total disconnectedness of these structures in the order topology \cite{shamberz, rsrevitaly13}. Much work has been done in showing that power series and analytic functions on the Levi-Civita field have the same smoothness properties as real power series and real analytic functions  \cite{sham}. The effort to extend these properties to as large a class of functions as possible has been aided considerably by the introduction of the so-called weak topology on the Levi-Civita field which is strictly weaker than the order topology and thus allows for more power series to be converge than the order topology. In this paper we will show how a similar weak topology may be induced on the Hahn field and we derive the convergence criterion for sequences and power series in this new topology. We begin with a brief introduction to the Hahn and Levi-Civita fields.
\begin{dfn}[Well-ordered subsets of $\setQ$]
	Let $A\subset\setQ$. Then we say that $A$ is \textbf{well-ordered} if every nonempty subset of $A$ has a minimum element.
\end{dfn}

\begin{dfn}[Left-finite subsets of $\setQ$]
	Let $A\subset\setQ$.  Then we say that $A$ is \textbf{left-finite} if, for any $q\in\setQ$, the set
	\[
		A_{<q} :=\left\{a\in A\middle|a<q\right\}
	\]
	is finite.
\end{dfn}

\begin{dfn}[The support of a function from $\setQ$ to $\setR$]
	Let $f:\setQ\rightarrow\setR$. The then the support of f is denoted by $\supp(f)$ and is defined by
	\[
		\supp(f) := \left\{q\in\setQ\middle|f(q) \ne 0\right\}.
	\]
\end{dfn}

\begin{dfn}[The sets $\setF$ and $\setRscript$]
	We define
	\[
		\setF := \left\{f:\setQ\rightarrow\setR\middle|\supp(f) \mbox{ is well-ordered}\right\}
	\]
	and
	\[
		\setRscript := \left\{f:\setQ\rightarrow\setR\middle|\supp(f) \mbox{ is left-finite}\right\}.
	\]
\end{dfn}

Note that every left-finite subset of $\setQ$ is also well-ordered but the converse is not necessarily true as there well-ordered subsets of $\setQ$ that are not left-finite. It follows that $\mathcal{R}\varsubsetneq\mathcal{F}$.

\begin{rmk}[Notation regarding elements versus functions]
	In the course of this document we will have opportunities to discuss both elements of $\setF$ and $\setRscript$ as well as functions on those sets. Since the elements of $\setF$ and $\setRscript$ are themselves functions from $\setQ$ to $\setR$ there is some possibility of confusion; for this reason we introduce the convention that square brackets (i.e. `$[$' and `$]$') denote an element of either $\setF$ or $\setRscript$ evaluated at some point in $\setQ$ where as curved brackets (i.e. `$($' and `$)$') denote a function on $\setF$ or $\setRscript$ evaluated at a point in one of those sets. So for example if we have $x\in\setF$, $q\in\setQ$, and $f:\setF\rightarrow\setF$, then
	\begin{itemize}
		\item $x[q]\in\setR$ denotes an element of $\setF$ evaluated at a point in $\setQ$. The result of the evaluation will of course be a real number.
		\item $f(x)\in\setF$ denotes a function evaluated at a point in $\setF$. The result of the evaluation is another element of $\setF$.
		\item $f(x)[q]\in\setR$ denotes a function evaluated at a point in $\setF$ and the result of that evaluation (itself an element of $\setF$) evaluated at a point in $\setQ$. Again, the result of this expression is a real number.
	\end{itemize}
\end{rmk}
We also introduce the following notation which will be useful when define the order later.
\begin{dfn}($\lambda$)
	Let $x\in \setF$ (resp. let $x\in\setRscript$). Then we define
	\[
		\lambda(x) := \begin{cases}\min{\supp(x)} &\mbox{ if } x\neq 0\\ \infty &\mbox{ if } x=0.\end{cases}
	\]
\end{dfn}

 Note that, for $x\ne 0$ in $\mathcal{F}$, $\min{\supp(x)}$ exists in $\setQ$ since the support is well-ordered. As we will see later after defining the order on $\mathcal{F}$ (resp. on $\mathcal{R}$), for $x\in\setF$, $\lambda(x)$ will correspond to the order of magnitude of $x$.

\begin{dfn}[addition and multiplication on $\setF$ and $\setRscript$]
	Let $x,y\in\setF$ (resp. $x,y\in\setRscript$) be given. Then we define $x+y$ and $x\cdot y$ as follows: for every $q\in\setQ$, let
	\begin{eqnarray*}
		(x + y)[q] &=& x[q] + y[q],\mbox{ and}\\
\mbox{ }&\mbox{ }&\mbox{ }\\
	(x\cdot y)[q] &=& \sum\limits_{\begin{tiny}\begin{array}{l}q_1 \in \supp(x)\\q_2 \in \supp(y)\\q_1 +q_2 =q \end{array}\end{tiny}}x[q_1]\cdot y[q_2]
	\end{eqnarray*}
\end{dfn}
 We have by \cite[Theorem 1.3]{graftonthesis} that if $A,B$ are well-ordered sets and $r\in A+B$ then there are only finitely many pairs $(a,b)\in A\times B$ such that $a+b = r$. This fact ensures that multiplication on $\setF$ (and $\setRscript$) is well-defined since the sum in the definition will always have finitely only many terms. Moreover, supp$(x+y)$ and supp$(x\cdot y)$ are well-ordered (resp. left-finite) in $\setQ$ and hence $\mathcal{F}$ (resp. $\mathcal{R}$) are closed under the aforementioned operations. With these definitions of addition and multiplication $(\setF, +, \cdot)$ and $(\setRscript, +, \cdot)$ are fields, and we can isomorphically embed the real numbers into both of them as a subfield using the map $\Pi:\setR \rightarrow \setF$ (resp. $\Pi:\setR \rightarrow \setRscript$) defined by
 \[
 	\Pi(r)[q] := \begin{cases}r &\mbox{ if } q=0\\0 &\mbox{ if } q\neq 0.\end{cases}
 \]

 \begin{dfn}(order on $\setF$ and $\setRscript$)
 	Let $x\ne y$ in $\setF$ (resp. in $ \setRscript$) be given. Then we say that $x$ is greater than $y$ and write $x>y$ if $(x-y)[\lambda(x-y)]>0$. We write $x<y$ if $y>x$ and $x\geq y$ if either $x=y$ or $x>y$.
 \end{dfn}
The relation $\ge$ defines a total order on $\setF$ and $\mathcal{R}$, which makes $(\setF, \geq)$ and $(\setRscript, \geq)$ into ordered fields. Moreover, the embedding of $\setR$ into these fields via the map $\Pi$ defined above is order preserving.

The map $|\cdot|_u: \mathcal{F} \rightarrow\setR$ (resp. $\mathcal{R} \rightarrow\setR$), given by
\[
|x|_u=\left\{\begin{array}{ll}e^{-\lambda(x)}&\mbox{if }x\ne 0\\
0&\mbox{if }x=0,\end{array}\right.
\]
is an ultrametric (non-Archimedean) valuation which induces on $\mathcal{F}$ (resp. on $\mathcal{R}$) the same topology as the order topology \cite{rsrevitaly13,barria-sham-18}; we will denote this topology by $\tau_v$ in the rest of the paper. The fields $\mathcal{F}$ and $\mathcal{R}$ are complete with respect to $\tau_v$ \cite{barria-sham-18}.

\begin{dfn}[$\ll$ and $\gg$]
	Let $x,y\in\setF$ (resp. $x,y\in \setRscript$) be nonnegative. Then we say that $x$ is infinitely larger than $y$ and write $x\gg y$ if for every $n\in\setN$, $x-ny > 0$; and we say that $x$ is infinitely smaller than $y$ and write $x\ll y$ if $y\gg x$. We say that $x$ is infinitely large if $x\gg 1$ and we say it is infinitely small or infinitesimal if $x\ll 1$.
\end{dfn}
Note that, in the above definition, $x\gg y$ if and only if $\lambda(x) < \lambda(y)$; in particular, $x$ is infinitely large if and only if $\lambda(x)<0$ and $x$ is infinitesimal if and only if $\lambda(x)>0$. The non-zero real numbers satisfy $\lambda(x)=0$ as does the sum of a real number and an infinitesimal number.

\begin{dfn}[The number d]
	We define the element $d\in\mathcal{R}\subset\setF$ as follows: for every $q\in\setQ$,
	\[
		d[q] := \begin{cases}1 &\mbox{ if } q=1\\ 0 &\mbox{ if } q\neq 1.\end{cases}
	\]
\end{dfn}
It follows from the above definition that $0<d\ll1$ (since $\lambda(d) = 1$);  and hence $d^{-1}\gg 1$  (in fact, $\lambda(d^{-1}) = -1$).

Since the Hahn field and Levi-Civita field are complete with respect to the valuation (order) topology $\tau_v$, it follows that infinite sums converge if and only if their terms form a null sequence. On one hand, this is convenient as it provides a simple convergence criterion for infinite series; but on the other hand this means that infinite series with real terms will converge if and only if they terminate, and power series with real coefficients (like the exponential, sine and cosine series) converge only in the infinitesimal neighbourhood around their centre.

On the Levi-Civita field this difficulty is overcome by defining a family of semi-norms and using them to induce a vector topology on the field that is weaker than $\tau_v$ and that turns $\mathcal{R}$ into a topological vector space \cite{rstopology09}.

\begin{dfn}[A Family of Semi-Norms on $\setRscript$]\label{lcsndef}
	For every $q\in\setQ$ define the map $\wkval{\cdot}{q}|_{\setRscript}:\setRscript\rightarrow\setR$ by
	\[
		\wkval{x}{q}:= \sup\left\{\abs{x[r]}\middle|r\in \supp(x)\cap(-\infty, q]\right\}.
	\]	
\end{dfn}

Since every $x\in\setRscript$ has left-finite support the supremum in the above definition is a maximum, it being the supremum of a finite set. This is not the case for $x\in\setF$ where the support of an element need only be well-ordered, which allows the support to have accumulation points and so for some $x\in\setF$, $\left\{\abs{x[r]}\middle|r\in \supp{x}\cap(-\infty, q]\right\}$ may have a divergent subsequence and thus the supremum may be $+\infty$. In the following sections we propose two different ways to overcome this difficulty and we show how they may be employed to induce a variety of topologies that are weaker than the order topology. Then we show how these topologies are related to each other, and finally we show what conditions must be satisfied to ensure that the induced topology has the same convergence criterion as the weak topology on the Levi-Civita field \cite{rstopology09,sham}.
	\section{Semi-norms}
One straightforward way to overcome the issue described above is simply to allow the semi-norm to take be equal to $+\infty$ in addition to values in $\setR$.

\begin{dfn}[Semi-Norms on $\setF$]\label{esndef}%Extended Semi-Norm DEFinition
	For every $r\in\setQ$ define a map $\unval{\cdot}{r}:\setF\rightarrow\setRext$ by
	\[
		\unval{x}{r}:=\sup\{\abs{x[q]}|q\leq r\}
	\]
\end{dfn}

The family of semi-norms from definition \ref{esndef} has the advantage that it reduces to \ref{lcsndef} when restricted to the Levi-Civita field. However, as we shall see, the topology induced on the Hahn field by this family is somewhat stronger than the weak topology on the Levi-Civita field. The following definitions will allow us to construct a similar family of semi-norms which, as we will see, has more useful properties.

\begin{dfn}[Well-Bounded Sets]
	Let $(S, \leq)$ be a totally ordered set such that every non empty subset $A\subset S$ has a maximum element. Then we say that $S$ is \textbf{well-bounded}.
\end{dfn}

\begin{dfn}[Well-Bounded Partition of \setQ]
	Let $\Gamma = \{\gamma_1, \ldots, \gamma_i, \ldots\}$ be a countable collection of mutually disjoint well-bounded subsets of $\setQ$ such that
	\[
		\bigcup\limits_{i=1}^{\infty}\gamma_i = \setQ.
	\]
	Then we say that $\Gamma$ is a \textbf{well-bounded partition} of $\setQ$. If in addition to the above we have that for every $i\in\setN$, $\gamma_i$ is finite then we call $\Gam$ a \textbf{finite well-bounded partition} of $\setQ$. For convenience we will use the notation
	\[
		\Gamma_n := \bigcup\limits_{i=1}^{n}\gamma_i.
	\]
	
\end{dfn}

\begin{xpl}[A well-bounded partition of \setQ]\label{diag_part}
	For every $n\in\setN$ define
	\[
		\gamma_n:= \left\{\frac{x}{y}\in\setQ\middle|x \mbox{ and } y \mbox{ are relatively prime}, \abs{x}+\abs{y}=n\right\},
	\]
	then $\Gamma = \{\gamma_i\}_{i\in\setN}$ is a well-bounded partition of \setQ. Clearly we have
	\[
		\bigcup\limits_{i=1}^{\infty}\gamma_i = \setQ
	\]
	since for any $q\in\setQ$ it is possible to find unique $x,y\in\setZ$ such that $x$ and $y$ are relatively prime and $\frac{x}{y}=q$. We then have by definition that $q\in\gamma_{\abs{x}+\abs{y}}$. Moreover, since every rational number has a unique reduced form, it follows that for any $i\neq j$ in $\setN$ , we have that $\gamma_i\cap\gamma_j = \emptyset$.
Finally, each $\gamma_i$ has only finitely many elements since
	\[
	\gamma_i \subseteq \left\{\frac{i-1}{1}, \frac{i-2}{2}, \ldots, \frac{1}{i-1}, -\frac{1}{i-1}, \ldots, -\frac{i-1}{1}\right\}.
	\]
Thus, the $\gamma_i$'s must be well-bounded. Therefore there is at least one well-bounded partition of \setQ.
\end{xpl}

\begin{table}[h]
\caption{The rational numbers}
\begin{center}
\begin{tabular}{ccccccccccccc}
\ldots&$\frac{-5}{1}$&$\mathbf{\frac{-4}{1}}$&$\frac{-3}{1}$                &$\frac{-2}{1}$                &$\frac{-1}{1}$                &$\frac{0}{1}$& $\frac{1}{1}$                &$\frac{2}{1}$                &$\frac{3}{1}$                &$\mathbf{\frac{4}{1}}$&$\frac{5}{1}$&\ldots\\
\ldots&\ldots              &                                        &$\mathbf{\frac{-3}{2}}$&                                        &$\frac{-1}{2}$                &                       & $\frac{1}{2}$                &                                      &$\mathbf{\frac{3}{2}}$&                                      &\ldots              &\ldots\\
\ldots&\ldots              &\ldots                               &                                        &$\mathbf{\frac{-2}{3}}$&$\frac{-1}{3}$                &                       & $\frac{1}{3}$                &$\mathbf{\frac{2}{3}}$&                                      &\ldots                             &\ldots              &\ldots\\
\ldots&\ldots              &\ldots                               &\ldots                               &\ldots                               &$\mathbf{\frac{-1}{4}}$&                       & $\mathbf{\frac{1}{4}}$&\ldots                             &\ldots                             &\ldots                             &\ldots              &\ldots\\
\ldots&\ldots              &\ldots                               &\ldots                               &\ldots                               &$\frac{-1}{5}$                &                       & $\frac{1}{5}$                &\ldots                             &\ldots                             &\ldots                             & \ldots             &\ldots\\
\end{tabular}
\end{center}
\caption*{The boldfaced numbers are the elements of $\gamma_5$.}
\end{table}

\begin{dfn}[The semi-norms on $\setF$ induced by a well-bounded partition of $\setQ$]\label{psndef}%Partition Semi-Norm DEFinition
	Let $\Gam$ be any well-bounded partition of $\setQ$ and for every $n\in\setN$ define the map $\gamval{\cdot}{n}:\setF\rightarrow\setR$ by
	\[
		\gamval{x}{n}:= \max\left\{\abs{x[q]}\middle|q\in\Gamn{n}\right\}.
	\]
\end{dfn}

Note that, because the support of $x$ is well-ordered and \Gamn{n} is well-bounded the set $\left\{\val{x[q]}\middle|q\in\Gamn{n}\right\}$ in the defnition above contains only finitely many non-zero elements and hence the maximum does exist. We have not yet shown that either definition \ref{esndef} or \ref{psndef} actually define semi-norms, to avoid unnecessary repetition we include only the proof of this for the latter definition; the proof for the former follows similarly.

\begin{prp}
	Let $\Gam$ be any well-bounded partition of $\setQ$ and let $n\in\setN$. Then, $\gamval{\cdot}{n}$ is a semi-norm.
\end{prp}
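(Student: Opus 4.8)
The plan is to verify directly the three defining properties of a semi-norm on the $\setR$-vector space $\setF$: non-negativity, absolute homogeneity, and the triangle inequality. Before doing so, I would first make sure the expression $\gamval{x}{n}$ is well-defined, i.e. that the maximum exists and is a (finite, non-negative) real number. The key observation --- already noted after Definition \ref{psndef} --- is that $\Gamn{n}=\bigcup_{i=1}^{n}\gamn{i}$ is well-bounded (a finite union of well-bounded sets is well-bounded, since the maximum of a nonempty subset $A$ is the largest of the finitely many maxima of the nonempty sets among $A\cap\gamn{1},\dots,A\cap\gamn{n}$), while $\supp(x)$ is well-ordered; hence $\supp(x)\cap\Gamn{n}$ is finite, because an infinite well-ordered set contains a strictly increasing sequence whose underlying set has no maximum, contradicting well-boundedness. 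Consequently $\left\{\abs{x[q]}\middle|q\in\Gamn{n}\right\}$ has only finitely many nonzero elements and its maximum is attained; this simultaneously gives non-negativity, since $\abs{x[q]}\geq 0$ for every $q$.

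For absolute homogeneity, I would recall that for $\alpha\in\setR$ (acting on $\setF$ through the embedding $\Pi$) and $x\in\setF$ one has $(\alpha x)[q]=\alpha\cdot x[q]$ for every $q\in\setQ$, directly from the definitions of multiplication on $\setF$ and of $\Pi$ (the convolution sum collapses to the single term at $q_1=0$). Then $\gamval{\alpha x}{n}=\max\left\{\abs{\alpha}\,\abs{x[q]}\middle|q\in\Gamn{n}\right\}$, and pulling the non-negative constant $\abs{\alpha}$ out of the maximum (trivially when $\alpha=0$) yields $\gamval{\alpha x}{n}=\abs{\alpha}\,\gamval{x}{n}$.

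For the triangle inequality, I would use $(x+y)[q]=x[q]+y[q]$ together with the triangle inequality in $\setR$: for each $q\in\Gamn{n}$,
\[
	\abs{(x+y)[q]}\leq\abs{x[q]}+\abs{y[q]}\leq\gamval{x}{n}+\gamval{y}{n}.
\]
Taking the maximum over $q\in\Gamn{n}$ on the left --- which is attained since $\supp(x+y)\subseteq\supp(x)\cup\supp(y)$ meets $\Gamn{n}$ in a finite set --- gives $\gamval{x+y}{n}\leq\gamval{x}{n}+\gamval{y}{n}$. (In fact the same argument yields the stronger ultrametric bound $\gamval{x+y}{n}\leq\max\left\{\gamval{x}{n},\gamval{y}{n}\right\}$, but this is not needed here.)

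I expect no serious obstacle: once well-definedness is in hand the argument is routine, being an immediate transcription of the corresponding properties of the absolute value on $\setR$ through the pointwise formulas for addition and scalar multiplication on $\setF$. The only point demanding a little care is the finiteness of $\supp(x)\cap\Gamn{n}$, i.e. the elementary fact that a totally ordered set which is both well-ordered and well-bounded must be finite; and, as noted, this is the ingredient that makes the maximum in Definition \ref{psndef} legitimate in the first place.
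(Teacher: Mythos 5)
Your proof is correct and follows essentially the same route as the paper's: a direct verification of the three semi-norm axioms by transcribing the corresponding properties of $\abs{\cdot}$ on $\setR$ through the pointwise formulas for addition and scalar multiplication, with the well-definedness of the maximum handled exactly as in the paper's remark following Definition \ref{psndef} (you supply the details --- a finite union of well-bounded sets is well-bounded, and a subset of $\setQ$ that is both well-ordered and well-bounded is finite --- which the paper leaves implicit). The one thing to fix is your parenthetical claim that the same argument yields the ultrametric bound $\gamval{x+y}{n}\leq\max\left\{\gamval{x}{n},\gamval{y}{n}\right\}$: this is false, because the coefficients $x[q]$ carry the ordinary Archimedean absolute value of $\setR$. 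For instance, with $x=y=\Pi(1)$ and $n$ large enough that $0\in\Gamn{n}$, one has $\gamval{x+y}{n}=2$ while $\max\left\{\gamval{x}{n},\gamval{y}{n}\right\}=1$. Since you explicitly note that this bound is not needed, it does not affect the validity of your argument, but the remark should be deleted.
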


\begin{proof}
We need to show that, for every $x,y\in\setF$ and for every $a\in\setR$, we have that
\begin{itemize}
	\item $\gamval{x}{n}\geq 0$
	\item $\gamval{ax}{n} = \abs{a}\gamval{x}{n}$
	\item $\gamval{x+y}{n}\leq \gamval{x}{n}+\gamval{y}{n}$.
\end{itemize}
The first property follows trivially from the definition. Now let $a\in\setR$ and $x\in\setF$ be given. Then
\begin{align*}
	\gamval{ax}{n} &= \max\left\{\abs{ax[q]}\middle|q\in\Gamn{n}\right\}\\
	&= \max\left\{\abs{a}\abs{x[q]}\middle|q\in\Gamn{n}\right\}\\
	&= \abs{a}\max\left\{\abs{x[q]}\middle|q\in\Gamn{n}\right\}\\
	&=\abs{a}\gamval{x}{n},
\end{align*}

Finally, let $x,y\in\setF$ be given. Then
\begin{align*}
	\gamval{x+y}{n} &= \max\left\{\abs{(x+y)[q]}\middle|q\in\Gamn{n}\right\}\\
	&\leq\max\left\{\abs{x[q]}+\abs{y[q]}\middle|q\in\Gamn{n}\right\}\\
	&\leq \max\left\{\abs{x[q]}\middle|q\in\Gamn{n}\right\} + \max\left\{\abs{y[q]}\middle|q\in\Gamn{n}\right\}\\
	&=\gamval{x}{n}+\gamval{y}{n}.
\end{align*}
\end{proof}
	\section{Vector topologies}
Having defined the semi-norms we will be working with, we proceed to show that both families can be used to induce vector topologies on $\mathcal{F}$ that are consistent with a translation invariant metric. Naturally the proofs are very similar in both cases so we will present them each once making notes where there are significant differences or modifications that must be accounted for. Note that, unless otherwise stated, $\Gam$ will denote any arbitrary well-bounded partition of $\setQ$ and  $\gamval{\cdot}{n\in\setN}:\setF\rightarrow\setR$ will denote the corresponding family of semi-norms.

\begin{rmk}
	Note that, if $r_1 < r_2$ then $\{\abs{x[q]}|q\leq r_1\}\subset\{\abs{x[q]}|q\leq r_2\}$ for all $x\in\setF$. It follows immediately that
	\[
		\unval{x}{r_1}\leq\unval{x}{r_2}.
	\]
	Similarly if $n,m\in\setN$ with $n<m$, then $\Gamn{n}\subset\Gamn{m}$ and hence for any $x\in\setF$
	\[
		\gamval{x}{n}\leq\gamval{x}{m}.
	\]
\end{rmk}

\begin{dfn}[Pseudo-Ball]
	Let $x\in\setF$, and let $r>0$ in $\setR$ be given (resp. let $q>0$ in $\setQ$ be given). Then we define
	\[
		\PB_{\Gam}\left(x,r\right):= \left\{y\in\setF\middle|\gamval{x-y}{\mu(r)}<r\right\}
	\]
	where
	\[
		\mu(r) := \left\lceil\frac{1}{r}\right\rceil
	\]
	is the smallest natural number $n$ such that $\frac{1}{n}<r$. We say that $\PB_{\Gam}(x,r)$ is the \textbf{``pseudo-ball''} at $x$ with radius $r$.
	Respectively we define
	\[
		\pbu{x}{q}:=\{y\in\setF|\unval{y-x}{1/q}<q\}
	\]
	and we call this a \textbf{``pseudo-ball''} at $x$ with radius $q$.
\end{dfn}

\begin{prp}
	Let $x\in\setF$ and let $0<r_1<r_2\in\setR$ be given (resp. let $0<r_1<r_2\in\setQ$ be given). Define $r:= \min\{r_1, r_2-r_1\}$, then for all $y\in\pbu{x}{r}$ we have that
	\[
		\pbg{y}{r_1} \subset \pbg{x}{r_2};
	\]
	in particular we have that
	\[
		\pbg{x}{r_1}\subset\pbg{x}{r_2}.
	\]
	Respectively, we have that
	\[
		\pbu{y}{r_1}\subset\pbu{x}{r_2};
	\]
	and hence
	\[
		\pbu{x}{r_1}\subset \pbu{x}{r_2}.
	\]
\end{prp}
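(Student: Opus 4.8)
The statement is the standard fact that the pseudo-balls form a base for a topology, and its proof is a triangle-inequality argument; the one subtlety is that the index $\mu(r)$ of the semi-norm used to define $\pbg{x}{r}$ \emph{decreases} as the radius $r$ increases, so that direction of monotonicity must be tracked carefully.

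First I would record the arithmetic forced by the choice $r=\min\{r_1,r_2-r_1\}$: one has $r\le r_2-r_1<r_2$ (using $r_1>0$) and $r_1<r_2$, and since $t\mapsto\lceil 1/t\rceil$ is non-increasing on $(0,\infty)$ this gives $\mu(r_2)\le\mu(r)$ and $\mu(r_2)\le\mu(r_1)$. Combined with the Remark above (each $\gamval{\cdot}{n}$ is non-decreasing in $n$), this yields $\gamval{a}{\mu(r_2)}\le\gamval{a}{\mu(r)}$ and $\gamval{a}{\mu(r_2)}\le\gamval{a}{\mu(r_1)}$ for every $a\in\setF$.

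Then I would fix $y\in\pbg{x}{r}$, so that $\gamval{x-y}{\mu(r)}<r$, and an arbitrary $z\in\pbg{y}{r_1}$, so that $\gamval{y-z}{\mu(r_1)}<r_1$; applying the preceding proposition (which tells us $\gamval{\cdot}{\mu(r_2)}$ is a semi-norm, hence obeys the triangle inequality) together with the monotonicity just recorded,
\[
	\gamval{x-z}{\mu(r_2)}\le\gamval{x-y}{\mu(r_2)}+\gamval{y-z}{\mu(r_2)}\le\gamval{x-y}{\mu(r)}+\gamval{y-z}{\mu(r_1)}<r+r_1\le r_2 .
\]
Thus $z\in\pbg{x}{r_2}$, which proves $\pbg{y}{r_1}\subset\pbg{x}{r_2}$; and taking $y=x$, which lies in $\pbg{x}{r}$ because $\gamval{0}{\mu(r)}=0<r$, gives the ``in particular'' inclusion $\pbg{x}{r_1}\subset\pbg{x}{r_2}$.

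Finally, the ``resp.'' case for the $\unval{\cdot}{\cdot}$ semi-norms is proved line-for-line the same way: one replaces the index $\mu(\cdot)$ by $1/(\cdot)$, uses that $q\mapsto 1/q$ is decreasing on the positive rationals together with the first monotonicity statement of the Remark, and invokes the triangle inequality for $\unval{\cdot}{1/r_2}$ (a semi-norm with values in $\setRext$, by the argument indicated after Definition \ref{esndef}; the relevant value is in fact finite here, being bounded above by $r_2$). I do not expect a genuine obstacle in any of this; the only things to get right are the monotonicity direction of $\mu$ and the bookkeeping that makes $r=\min\{r_1,r_2-r_1\}$ exactly the quantity for which the two error terms sum to at most $r_2$.
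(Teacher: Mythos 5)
Your proof is correct and is essentially the paper's own argument: the same triangle-inequality computation with the same monotonicity bookkeeping for $\mu$, differing only in that you apply the triangle inequality directly at index $\mu(r_2)$ while the paper works at $\mu(r_1)$ and then passes down to $\mu(r_2)$ at the end. (You also silently correct the statement's typo $\pbu{x}{r}$ to $\pbg{x}{r}$, just as the paper's proof does.)
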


\begin{proof}
	Let $y\in\pbg{x}{r}$ be given and let $z\in\pbg{y}{r_1}$. Then, by definition, we have that
	\[
		\gamval{y-z}{\mu(r_1)}<r_1.
	\]
	Since $r_1<r_2$, it follows that $\mu(r_1)\geq\mu(r_2)$, and hence
	\[
		\gamval{x-z}{\mu(r_2)}\leq\gamval{x-z}{\mu(r_1)}.
	\]
	It follows that
	\begin{align*}
		\gamval{x-z}{\mu(r_1)} &\leq \gamval{y-z}{\mu(r_1)} + \gamval{x-y}{\mu(r_1)}\\
		&< r_1 + \gamval{x-y}{\mu(r_1)}.
	\end{align*}
	Recall that $r = \min\{r_1, r_2-r_1\}\leq r_1$, and hence
	\[
		\gamval{x-y}{\mu(r_1)} \leq \gamval{x-y}{\mu(r)}.
	\]
	Since $y\in\pbg{x}{r}$, we have that
	\[
		\gamval{x-y}{\mu(r)} < r \leq r_2-r_1.
	\]
	Altogether, it follows that
	\begin{align*}
		\gamval{x-z}{\mu(r_2)} &<  r_1 + \gamval{x-y}{\mu(r_1)}\\
		&\leq r_1 +\gamval{x-y}{\mu(r)}\\
		&< r_1 + (r_2 - r_1)\\
		&=r_2;
	\end{align*}
and hence
$z\in\pbg{x}{r_2}$.
	This argument holds for any $z\in\pbg{y}{r_1}$, and hence
	\[
		\pbg{y}{r_1} \subset \pbg{x}{r_2}.
	\]
In particular, letting $y=x$ in $\pbg{x}{r}$, we have that
	\[
		\pbg{x}{r_1} \subset \pbg{x}{r_2}.
	\]
\end{proof}

We can now define the topologies induced by these families of semi-norms by letting a set $S$ be open if every point in $S$ is contained in a pseudo-ball which is itself contained in $S$.

\begin{dfn}[The topologies induced by families of semi-norms]
	We define
	\[
		\topg:= \left\{O\subset\setF\middle| \forall x\in O, \exists r>0 \mbox{ in }\setR \mbox{ such that } \pbg{x}{r}\subset O\right\},
	\]
	and
	\[
		\topu := \left\{O\subset\setF\middle|\forall x\in O, \exists r>0 \mbox{ in }\setR \mbox{ such that } \pbu{x}{r}\subset O\right\}.
	\]
	We call these the \textbf{topology induced by $\gam$} and the \textbf{locally uniform support topology} respectively. The name of the latter topology will be justified when we discuss the convergence criterion in this topology.
\end{dfn}

\begin{prp}
	$\topg$ is a topology on \setF (resp. $\topu$ is a topology on \setF).
\end{prp}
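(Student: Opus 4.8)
The plan is to verify the three axioms of a topology directly from the definition of $\topg$, with the argument for $\topu$ being identical throughout once $\PB_{\Gam}$ is replaced by $\PB_u$ and the radius is allowed to range over the positive rationals rather than the positive reals. First, $\emptyset\in\topg$ holds vacuously, and $\setF\in\topg$ since for any $x\in\setF$ the pseudo-ball $\pbg{x}{1}$ is trivially contained in $\setF$. Next, for an arbitrary family $\{O_\alpha\}_{\alpha\in A}\subset\topg$ and any $x\in\bigcup_{\alpha}O_\alpha$, one picks an index $\beta$ with $x\in O_\beta$; by definition there is some $r>0$ in $\setR$ with $\pbg{x}{r}\subset O_\beta\subset\bigcup_{\alpha}O_\alpha$, so the union belongs to $\topg$.

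The one remaining point is closure under finite intersections, and by an obvious induction it suffices to handle two sets $O_1,O_2\in\topg$. Given $x\in O_1\cap O_2$, choose radii $r_1,r_2>0$ in $\setR$ with $\pbg{x}{r_i}\subset O_i$ for $i=1,2$, and set $r:=\min\{r_1,r_2\}>0$. Since $r\le r_i$ for each $i$, the nesting of pseudo-balls established in the preceding proposition (its ``in particular'' conclusion, which also subsumes the trivial case $r=r_i$) gives $\pbg{x}{r}\subset\pbg{x}{r_i}\subset O_i$, and therefore $\pbg{x}{r}\subset O_1\cap O_2$. Hence $O_1\cap O_2\in\topg$, completing the verification.

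I do not expect any genuine obstacle here: the only nontrivial ingredient is precisely the inclusion $\pbg{x}{r_1}\subset\pbg{x}{r_2}$ for $r_1\le r_2$, which was proved in the previous proposition and which ultimately rests on the monotonicity $\gamval{x}{n}\le\gamval{x}{m}$ for $n\le m$ together with the fact that $\mu$ is non-increasing. For the $\topu$ version one should simply remark that the same proof goes through verbatim, using instead the monotonicity $\unval{x}{r_1}\le\unval{x}{r_2}$ for $r_1\le r_2$ and the corresponding nesting of the $\PB_u$-pseudo-balls recorded alongside it.
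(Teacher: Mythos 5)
Your proof is correct and follows essentially the same route as the paper's: both verify the three axioms directly, handle arbitrary unions by passing to a single member of the family, and reduce finite intersections to the two-set case using $r=\min\{r_1,r_2\}$ together with the nesting of pseudo-balls from the preceding proposition. Your explicit handling of the boundary case $r=r_i$ and the remark on $\emptyset$, $\setF$ are minor elaborations of what the paper states more tersely.
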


\begin{proof}
	We need to show that $\topg$ is closed under arbitrary unions and finite intersections, and that $\emptyset,\setF\in\topg$. Let $\left\{O_\alpha\right\}_{\alpha\in A}$ be an arbitrary collection of elements of $\topg$; and let $x\in\bigcup\limits_{\alpha\in A}O_{\alpha}$ be given. Then there is an $\alpha_0\in A$ such that $x\in O_{\alpha_0}$. But $O_{\alpha_0}\in\topg$ so by definition there is a $r>0$ in $\setR$ such that $\pbg{x}{r}\subset O_{\alpha_0}$. It follows immediately that $\pbg{x}{r}\subset\bigcup\limits_{\alpha\in A}O_{\alpha}$. Thus, $\bigcup\limits_{\alpha\in A}O_{\alpha}$ is open, and hence $\topg$ is closed under arbitrary unions.

	 Now let $O_1,O_2\in\topg$ and let $x\in O_1\cap O_2$ be given. Since $x\in O_1$, there exists $r_1>0$ in $\setR$ such that $\pbg{x}{r_1}\subset O_1$. Similarly there exists $r_2>0$ in $\setR$ such that $\pbg{x}{r_2}\subset O_2$. Let $r=\min\left\{r_1, r_2\right\}$. Then we have that $\pbg{x}{r}\subset \pbg{x}{r_1}\subset O_1$ and $\pbg{x}{r}\subset \pbg{x}{r_2}\subset O_2$; and hence $\pbg{x}{r}\subset O_1 \cap O_2$. This shows that $\topg$ is closed under the intersection of two of its elements; and by induction it is therefore closed under finite intersections.

Finally, that $\emptyset$ and $\setF$ are in  $\topg$ follows from that the fact that they both trivially satisfy the defining property of $\topg$.
\end{proof}

\begin{prp}
	$\left(\setF, \topg\right)$ (resp. $\left(\setF, \topu\right)$) is a topological vector space.
\end{prp}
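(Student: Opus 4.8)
The plan is to verify, for the topology already constructed in the preceding proposition, the two defining continuity properties of a topological vector space: continuity of vector addition $\setF\times\setF\to\setF$ and of scalar multiplication $\setR\times\setF\to\setF$ (and, if one's definition of a TVS also demands it, Hausdorffness, which is immediate: $\gamval{x}{n}=0$ for every $n$ forces $x[q]=0$ for every $q\in\setQ$ since the $\Gamn{n}$ exhaust $\setQ$). I will carry out everything for $\topg$; the argument for $\topu$ runs formally in parallel, the only --- but genuine --- subtlety being the places where an extended semi-norm $\unval{\cdot}{s}$ can equal $+\infty$.

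The first step is to observe that the pseudo-balls centred at a point form a neighbourhood basis there. By the very definition of $\topg$, every $\topg$-neighbourhood of $x$ contains some $\pbg{x}{r}$, so the only thing to check is that each $\pbg{x}{r}$ is itself a $\topg$-neighbourhood of $x$. For this I would set $T:=\{y\in\pbg{x}{r}:\pbg{y}{s}\subset\pbg{x}{r}\text{ for some }s>0\}$, note $x\in T$, and show $T\in\topg$: if $y\in T$ with $\pbg{y}{s}\subset\pbg{x}{r}$, then applying the previous proposition with the pair $(r_1,r_2)=(s/2,s)$ gives $\pbg{z}{s/2}\subset\pbg{y}{s}\subset\pbg{x}{r}$ for every $z\in\pbg{y}{s/2}$, hence $\pbg{y}{s/2}\subset T$. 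Thus it suffices to check the continuity conditions against pseudo-ball neighbourhoods.

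For addition, fix $(x,y)$ and a pseudo-ball $\pbg{x+y}{r}$. Since $r/2<r$ we have $\mu(r/2)\geq\mu(r)$, so monotonicity of the semi-norms gives $\gamval{x-x'}{\mu(r)}\leq\gamval{x-x'}{\mu(r/2)}<r/2$ for $x'\in\pbg{x}{r/2}$, and likewise for $y'\in\pbg{y}{r/2}$; the triangle inequality then yields
\[
\gamval{(x+y)-(x'+y')}{\mu(r)}\leq\gamval{x-x'}{\mu(r)}+\gamval{y-y'}{\mu(r)}<r/2+r/2=r,
\]
so $\pbg{x}{r/2}+\pbg{y}{r/2}\subset\pbg{x+y}{r}$, and both summands are neighbourhoods by the first step.

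Continuity of scalar multiplication is where I expect the real work, since one must simultaneously control the scalar perturbation, the vector perturbation, and the growth of the pseudo-ball radius. Fix $(a,x)$ and a pseudo-ball $\pbg{ax}{r}$. For $\delta<r$ and $x'\in\pbg{x}{\delta}$, write $ax-bx'=a(x-x')+(a-b)x'$ and use $\gamval{x'}{\mu(r)}\leq\gamval{x}{\mu(r)}+\gamval{x-x'}{\mu(r)}$ together with $\gamval{x-x'}{\mu(r)}\leq\gamval{x-x'}{\mu(\delta)}<\delta$ to get
\[
\gamval{ax-bx'}{\mu(r)}\leq|a|\gamval{x-x'}{\mu(r)}+|a-b|\bigl(\gamval{x}{\mu(r)}+\gamval{x-x'}{\mu(r)}\bigr)<|a|\delta+|a-b|\bigl(\gamval{x}{\mu(r)}+\delta\bigr).
\]
Here one uses that $\gamval{x}{\mu(r)}$ is a finite real number; choosing $\delta>0$ with $\delta<r$ and $|a|\delta<r/2$, and then $\varepsilon>0$ with $\varepsilon\bigl(\gamval{x}{\mu(r)}+\delta\bigr)<r/2$, gives $bx'\in\pbg{ax}{r}$ whenever $|a-b|<\varepsilon$ and $x'\in\pbg{x}{\delta}$. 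For $\topu$ this last estimate is precisely the point where one needs the relevant value $\unval{x}{s}$ to be finite, and that is the step in which the two cases do not run completely in parallel, so it is the one I would treat most carefully. Combining the three steps gives the proposition.
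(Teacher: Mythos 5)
Your argument for $\topg$ is correct and, in places, better organized than the paper's. The paper proves that singletons are closed by a direct computation with $\lambda(x-y)$; you instead note that the family $\gamval{\cdot}{n}$ is separating because the sets $\Gamn{n}$ exhaust $\setQ$ — both work. Your preliminary step showing that each $\pbg{x}{r}$ is a $\topg$-neighbourhood of its centre makes explicit something the paper uses tacitly (its continuity proofs place products of pseudo-balls inside the relevant preimages and treat these as neighbourhoods). The real difference is scalar multiplication: the paper splits into $\alpha=0$ versus $\alpha\neq 0$, each with subcases according to whether $\gamval{x}{\mu(r)}$ vanishes, while your single estimate from $ax-bx'=a(x-x')+(a-b)x'$ together with $\gamval{x'}{\mu(r)}\leq\gamval{x}{\mu(r)}+\gamval{x-x'}{\mu(r)}$ handles all cases at once. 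That is a genuine simplification.

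For $\topu$, however, the issue you flag and defer is not a detail to be handled with extra care — it is the point at which the parallel argument breaks down, and it cannot be repaired. There are $x\in\setF$ with $\unval{x}{1}=+\infty$: take $\supp(x)=\left\{1-\frac{1}{n}\,\middle|\,n\in\setN\right\}$ with $x\left[1-\frac{1}{n}\right]=n$; the support is well-ordered but the values are unbounded below the level $1$. For such an $x$, any scalar $a$, and any $r\leq 1$, every $b\neq a$ satisfies $\unval{bx-ax}{1/r}=\abs{b-a}\,\unval{x}{1/r}=+\infty$, so $bx\notin\pbu{ax}{r}$. Since $\pbu{ax}{r}$ is a $\topu$-neighbourhood of $ax$ (your first step adapts verbatim) and every product neighbourhood of $(a,x)$ in $\setR\times\setF$ contains some $(b,x)$ with $b\neq a$, scalar multiplication is discontinuous at $(a,x)$. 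So the $\topu$ half of the statement cannot be proved by your route; note that the paper's own proof, written only for $\topg$ with the $\topu$ case dismissed as ``respectively,'' divides by quantities of the form $\gamval{x}{\mu(r)}$ that are always finite for $\topg$ but need not be finite for $\topu$, and therefore suffers from exactly the defect you identified.
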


\begin{proof}
	To show that $\left(\setF, \topg\right)$ is a topological vector space, we will prove the following.
	\begin{itemize}
		\item Every singlton is closed with respect to \topg.
		\item Vector addition is continuous with respect to \topg.
		\item Scalar multiplication is continuous with respect to \topg.
	\end{itemize}
	
Let $x\in\setF$ be given. We will show that $\{x\}$ is closed in $\topg$ by showing that its complement is open. So, let $y\in\setF\setminus\{x\}$ be given, let $q = \lambda(x-y)$, and let $r_0:= \abs{(x-y)[q]}$. Choose $N\in\setN$ large enough so that $q\in\Gamn{N}$, then let $r = \min\left\{\frac{1}{N}, \frac{r_0}{2}\right\}$. We will show that $\pbg{y}{r}\subset\setF\setminus\{x\}$ by showin that $x\notin\pbg{y}{r}$. Note that
	\begin{align*}
		\gamval{x-y}{\mu(r)} &\geq \gamval{x-y}{N}\\
		&=\max\left\{\abs{(x-y)[q]}\middle|q\in\Gamn{N}\right\}\\
		&\geq r_0\\
		&> r.
	\end{align*}
	Thus, $x\notin\pbg{y}{r}$, and hence$\setF\setminus\{x\}$ is open.
	
	Next, we show that $+:\setF\times\setF\rightarrow\setF$ is a continuous operation on $(\setF, \topg)\times (\setF, \topg)$. Let $O\subset\setF$ be any open set with respect to \topg, let $A\subset\setF\times\setF$ be the inverse image of $O$ under addition. We will show that $A$ is open in $(\setF, \topg)\times (\setF, \topg)$. Fix $(x_1, x_2)\in A$, then $x_1+x_2\in O$. $O$ is open so there exists a $r>0$ in $\setR$ such that $\pbg{x_1+x_2}{r}\subset O$. Let $y\in\pbg{x_1}{\frac{r}{2}}$, $z\in\pbg{x_2}{\frac{r}{2}}$. Then,
	\begin{align*}
		\gamval{y+z-x_1-x_2}{\mu(r)} &\leq \gamval{y-x_1}{\mu(r)} + \gamval{z-x_2}{\mu(r)}\\
		&\leq  \gamval{y-x_1}{\mu\left(\frac{r}{2}\right)} + \gamval{z-x_2}{\mu\left(\frac{r}{2}\right)}\\
		&< \frac{r}{2} + \frac{r}{2} = r.
	\end{align*}
	Thus, $y+z\in O$ and hence $(y,z)\in A$. Therefore, $A\subset\setF\times\setF$  is open with respect to $(\setF, \topg)\times (\setF, \topg)$, and addition is continuous.
	
Finally we show that $\cdot:\setR\times\setF\rightarrow\setF$ is continuous with respect to \topg. So, let $O\subset\setF$ be open with respect to $\topg$ and let $S\subset\setR\times\setF$ be the inverse image of $O$ under scalar multiplication. Let $(\alpha, x)\in S$, then $\alpha x \in O$ and since $O$ is open there is a $r>0$ in $\setR$ such that $\pbg{\alpha x}{r}\subset O$. Now we have two cases; either $\alpha=0$ or $\alpha\neq0$. We deal with these cases separately.
	
	Assume that $\alpha = 0$. Then we have two possibilities: either $\gamval{x}{\mu(r)} = 0$ or $\gamval{x}{\mu(r)}\neq0$. Consider first the subcase  $\gamval{x}{\mu(r)} = 0$; then we will show that $(-1, 1)\times\pbg{x}{r} \subset S$. Let $\beta\in(-1,1)$ and $y\in\pbg{x}{r}$ be given, then
	\begin{align*}
		\gamval{\beta y}{\mu(r)} &= \abs{\beta}\gamval{y}{\mu(r)}\\
		&<\gamval{y}{\mu(r)}\\
		&\leq \gamval{y-x}{\mu(r)} + \gamval{x}{\mu(r)}\\
		&= \gamval{y-x}{\mu(r)} < r.
	\end{align*}
	Thus, $\beta y\in\pbg{0}{r}\subset O$ and hence $(\beta, y)\in S$. Next we consider $\gamval{x}{\mu(r)}\neq0$; let
	\[
		r_1=\min\left\{\frac{1}{2}, \frac{r}{2\gamval{x}{\mu(r)}}\right\}.
	\]
	Then $r_1\in\setR$ and $r_1>0$; and we will show that $(-r_1, r_1)\times\pbg{x}{r}\subset S$. So let $\beta\in(-r_1, r_1)$ and $y\in\pbg{x}{r}$ be given. Then
	\begin{align*}
		\gamval{\beta y}{\mu(r)} &\leq \gamval{\beta(y-x)}{\mu(r)} +\gamval{\beta x}{\mu(r)}\\
		&\leq \abs{\beta}\gamval{y-x}{\mu(r)} + \abs{\beta}\gamval{x}{\mu(r)}\\
		&<r_1r + r_1\gamval{x}{\mu(r)}\\
		&\leq \frac{r}{2} + \frac{r}{2\gamval{x}{\mu(r)}}\gamval{x}{\mu(r)}\\
		&= r.
	\end{align*}
	This, $\beta y\in O$ and hence $(\beta, y)\in S$.
	
Now we consider the case $\alpha\neq0$. Let
	\[
		r_1=\min\left\{\frac{r}{2}, \frac{r}{2\abs{\alpha}}\right\}
	\]
	and
	\[
		\nu:= \begin{cases} \frac{1}{2} &\mbox{ if } \gamval{x}{\mu(r)}=0\\\min\left\{\frac{1}{2}, \frac{r}{4\gamval{x}{\mu(r)}}\right\} &\mbox{ if } \gamval{x}{\mu(r)}\neq0\end{cases}.
	\]
	We will show that $(\alpha-\nu, \alpha+\nu)\times\pbg{x}{r_1}\subset S$. So let $\beta\in(\alpha-\nu, \alpha+\nu)$ and $y\in\pbg{x}{r_1}$ be given. Then
	\begin{align*}
		\gamval{\beta y - \alpha x}{\mu(r)} &= \gamval{(\beta-\alpha)(y-x)+(\beta-\alpha)x +\alpha(y-x)}{\mu(r)}\\
		&\leq \abs{\beta-\alpha}\gamval{y-x}{\mu(r)} + \abs{\beta-\alpha}\gamval{x}{\mu(r)} + \abs{\alpha}\gamval{y-x}{\mu(r)}.
	\end{align*}
	However, $r_1\leq\frac{r}{2}<r$ and hence
	\[
		\gamval{y-x}{\mu(r)} \leq \gamval{y-x}{\mu(r_1)}<r_1\leq\frac{r}{2\abs{\alpha}}.
	\]
	Thus,
	\[
		\abs{\alpha}\gamval{y-x}{\mu(r)} < \frac{r}{2}.
	\]
	Moreover,
	\[
		\abs{\beta-\alpha}\gamval{y-x}{\mu(r)} < \abs{\beta-\alpha}r_1 < \nu r_1 \leq \frac{r}{4}.
	\]
	And finally,
	\[
		\abs{\beta-\alpha}\gamval{x}{\mu(r)} < \nu\gamval{x}{\mu(r)} \leq \frac{r}{4}.
	\]
	Thus, altogether, we obtain that
	\begin{align*}
		\gamval{\beta y - \alpha x}{\mu(r)}&\leq \abs{\beta-\alpha}\gamval{y-x}{\mu(r)} + \abs{\beta-\alpha}\gamval{x}{\mu(r)} + \abs{\alpha}\gamval{y-x}{\mu(r)}\\
		&< \frac{r}{4} + \frac{r}{4} + \frac{r}{2} =r.
	\end{align*}
	So $\beta y\in O$ and hence $(\beta, y)\in S$. Therefore we conclude that $(\setF, \topg)$ is a topological vector space.
\end{proof}

\begin{prp}\label{clbut}%Countable Local Base Uniform Topology
	The family of pseudo-balls $\left\{\pbu{0}{q}\middle|q\in\setQ^{+}\right\}$ (resp. the family of pseudo-balls $\left\{\pbg{0}{q}\middle|q\in\setQ^{+}\right\}$) is a countable local base for $\topu$ (resp. $\topg$) at $0$.
\end{prp}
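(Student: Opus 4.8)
The plan is to verify directly the two defining properties of a countable local base at $0$, arguing for $\topg$ in detail (the argument for $\topu$ requires only the obvious notational changes, as in the earlier proofs): namely that (i) each $\pbg{0}{q}$ with $q\in\setQ^{+}$ is a $\topg$-open set containing $0$, and (ii) every $\topg$-open set $O$ with $0\in O$ contains some $\pbg{0}{q}$ with $q\in\setQ^{+}$. Countability of the family is automatic since $\setQ^{+}$ is countable.

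First I would dispose of (ii). Since $O\in\topg$ and $0\in O$, the definition of $\topg$ furnishes an $r>0$ in $\setR$ with $\pbg{0}{r}\subset O$; choosing any $q\in\setQ^{+}$ with $q<r$ and invoking the nesting proposition (the ``in particular'' clause $\pbg{x}{r_1}\subset\pbg{x}{r_2}$ with $x=0$, $r_1=q$, $r_2=r$) gives $\pbg{0}{q}\subset\pbg{0}{r}\subset O$. For $\topu$ the pseudo-balls are indexed by positive rationals, so the pseudo-ball produced by openness of $O$ is already a member of the family and no passage to a rational radius is needed.

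The substantive part is (i). That $0\in\pbg{0}{q}$ is immediate from $\gamval{0}{\mu(q)}=0<q$. To show $\pbg{0}{q}$ is open, fix $y\in\pbg{0}{q}$ and put $\epsilon:=q-\gamval{y}{\mu(q)}>0$. I would then choose $r':=\min\{q/2,\epsilon/2\}$; the point of halving $q$ is that $r'<q$ forces $\mu(r')\ge\mu(q)$, so $\Gamn{\mu(q)}\subset\Gamn{\mu(r')}$ and therefore $\gamval{w}{\mu(q)}\le\gamval{w}{\mu(r')}$ for every $w\in\setF$ by the monotonicity remark. Now for any $z\in\pbg{y}{r'}$ the triangle inequality for $\gamval{\cdot}{\mu(q)}$ gives
\[
\gamval{z}{\mu(q)}\le\gamval{z-y}{\mu(q)}+\gamval{y}{\mu(q)}\le\gamval{z-y}{\mu(r')}+(q-\epsilon)<r'+q-\epsilon\le q-\tfrac{\epsilon}{2}<q,
\]
so $z\in\pbg{0}{q}$; hence $\pbg{y}{r'}\subset\pbg{0}{q}$, and since $y$ was arbitrary, $\pbg{0}{q}\in\topg$. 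For $\topu$ the same computation works with $\gamval{\cdot}{\mu(r)}$ replaced by $\unval{\cdot}{1/r}$, using that $r'<q$ gives $(-\infty,1/q]\subset(-\infty,1/r']$ and hence $\unval{w}{1/q}\le\unval{w}{1/r'}$, and with $r'$ taken to be any rational in $\bigl(0,\min\{q,\epsilon\}\bigr)$.

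I expect the only real obstacle to be this openness argument, precisely because the single parameter $r$ of a pseudo-ball plays two roles at once: it selects the seminorm index $\mu(r)$ (equivalently $1/r$ in the $u$-case) and it caps that seminorm's value. Consequently the auxiliary radius $r'$ must be chosen small enough to satisfy both $\mu(r')\ge\mu(q)$ (so that the finer seminorm dominates the coarser one) and $r'<\epsilon$ (to fit inside the available slack); once both constraints hold, the triangle inequality closes the argument immediately. Everything else reduces to the monotonicity of the seminorm families in their index and the nesting of pseudo-balls, both of which are already established.
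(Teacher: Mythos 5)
Your proposal is correct and, for the substantive containment step, identical to the paper's proof: given an open $O\ni 0$, extract a real radius $r$ from the definition of the topology, pass to a rational $q<r$, and invoke the nesting proposition to get $\pbg{0}{q}\subset\pbg{0}{r}\subset O$. The only difference is that you additionally verify that each $\pbg{0}{q}$ is itself $\topg$-open (via the $\epsilon$-slack and the monotonicity of the semi-norms in their index), a point the paper's proof leaves implicit but which is needed for the family to be a local base of \emph{open} neighbourhoods; your verification of it is sound.
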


\begin{proof}
	Let $O\in\topu$ be any open set in $\setF$ containing $0$. Then there exists $r>0$ in $\setR$ such that
	$\pbu{0}{r}\subset O$.
	Let $q\in\setQ$ be such that $0<q<r$; then $\pbu{0}{q}\subset\pbg{0}{r}\subset O$. Thus, for any open set containing $0$, there is a $q\in\setQ$ such that $0\in\pbu{0}{q}\subset O$ and hence $\left\{\pbu{0}{q}\middle|q\in\setQ^{+}\right\}$ is a countable local base for $\topu$ at $0$.
\end{proof}

\begin{crl}
	For any $x\in\setF$, the family of pseudo-balls $\left\{\pbu{x}{q}\middle|q\in\setQ^{+}\right\}$ (resp. $\left\{\pbg{x}{q}\middle|q\in\setQ^{+}\right\}$)  is a countable local base for $\topu$ (resp. $\topg$) at $x$.
\end{crl}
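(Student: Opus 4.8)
The plan is to deduce this corollary from Proposition~\ref{clbut} by exploiting the fact that $\topu$ (resp.\ $\topg$) is a vector topology, so that translation by $x$ is a homeomorphism of $\setF$ onto itself. The key observation to make first is that translation sends pseudo-balls to pseudo-balls: for any $x,y\in\setF$ and any $q\in\setQ^{+}$ we have $y\in\pbu{x}{q}$ if and only if $\unval{y-x}{1/q}<q$, if and only if $(y-x)\in\pbu{0}{q}$; hence $\pbu{x}{q}=x+\pbu{0}{q}$, where $x+S:=\{x+s\mid s\in S\}$. The same computation with $\gamval{\cdot}{\mu(q)}$ in place of $\unval{\cdot}{1/q}$ gives $\pbg{x}{q}=x+\pbg{0}{q}$.

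Next I would invoke that the map $T_x:\setF\to\setF$, $T_x(z)=x+z$, is a homeomorphism in $\topu$ (resp.\ $\topg$): it is continuous because addition is continuous (Proposition on $(\setF,\topu)$ being a topological vector space) with the second argument held fixed, and its inverse $T_{-x}$ is continuous for the same reason. A homeomorphism carries a local base at a point $p$ to a local base at its image $T_x(p)$. Applying this with $p=0$: by Proposition~\ref{clbut} the family $\left\{\pbu{0}{q}\mid q\in\setQ^{+}\right\}$ is a countable local base at $0$, so its image under $T_x$, namely $\left\{x+\pbu{0}{q}\mid q\in\setQ^{+}\right\}=\left\{\pbu{x}{q}\mid q\in\setQ^{+}\right\}$ by the identity from the first paragraph, is a countable local base at $x=T_x(0)$. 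Countability is preserved since the indexing set $\setQ^{+}$ is unchanged. The identical argument with $\topg$ and $\pbg{\cdot}{\cdot}$ gives the parenthetical statement.

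Alternatively, if one prefers to avoid citing the homeomorphism fact explicitly, the argument can be written out directly and is essentially a one-line adaptation of the proof of Proposition~\ref{clbut}: given $O\in\topu$ with $x\in O$, there is $r>0$ in $\setR$ with $\pbu{x}{r}\subset O$; choosing $q\in\setQ^{+}$ with $q<r$ and applying the previously established inclusion $\pbu{x}{q}\subset\pbu{x}{r}$ (the parenthetical conclusion of the Proposition comparing pseudo-balls), we get $x\in\pbu{x}{q}\subset O$. I do not anticipate any real obstacle here; the only point requiring the smallest amount of care is confirming $x\in\pbu{x}{q}$, which is immediate since $\unval{x-x}{1/q}=\unval{0}{1/q}=0<q$, and similarly $\gamval{x-x}{\mu(q)}=0<q$ for the $\topg$ case.
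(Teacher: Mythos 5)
Your proposal is correct; the paper states this corollary without proof, and your second (direct) argument is exactly the intended one-line adaptation of the proof of Proposition~\ref{clbut}, using the nesting of pseudo-balls and the fact that $\gamval{x-x}{\mu(q)}=0<q$. The translation-invariance route via $\pbg{x}{q}=x+\pbg{0}{q}$ is equally valid and consistent with the topological-vector-space structure already established.
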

	\section{Relations between topologies}
	Now that we have established that $\topg$ and $\topu$ are vector topologies, we will investigate their relationship to each other and to $\topv$. We begin by recalling the definition of a compact set.
\begin{dfn}
	Suppose $\tau$ is a topology on $\setF$ and let $A\subset\setF$. Then we say that $A$ is compact in $(\setF, \tau)$ if every open cover of $A$ in $(\setF, \tau)$ admits a finite subcover.
\end{dfn}

\begin{prp}
	Let $\tau$ be any topology on $\setF$ satisfying $\tau\subsetneq\topv$. Suppose $A\subset\setF$ is compact in $(\setF, \topv)$, then $A$ is also compact in $(\setF, \tau)$.
\end{prp}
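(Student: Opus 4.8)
The plan is to exploit the elementary fact that compactness is inherited when one passes to a coarser topology, which is exactly the situation here since $\tau \subsetneq \topv$ gives in particular $\tau \subseteq \topv$. The whole argument is bookkeeping with open covers, so I would not introduce any of the specific structure of $\setF$ (the valuation, the semi-norms, the vector space structure) — none of it is needed.

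First I would fix an arbitrary open cover $\left\{O_i\right\}_{i\in I}$ of $A$ in $\left(\setF,\tau\right)$, so each $O_i\in\tau$ and $A\subseteq\bigcup_{i\in I}O_i$. Since $\tau\subseteq\topv$, every $O_i$ also belongs to $\topv$, and therefore $\left\{O_i\right\}_{i\in I}$ is likewise an open cover of $A$ in $\left(\setF,\topv\right)$. Next I would invoke the hypothesis that $A$ is compact in $\left(\setF,\topv\right)$ to extract a finite subset $\{i_1,\dots,i_k\}\subseteq I$ with $A\subseteq\bigcup_{j=1}^{k}O_{i_j}$. Because each $O_{i_j}$ is one of the sets from the original cover, this same finite collection is a finite subcover of $\left\{O_i\right\}_{i\in I}$ in $\left(\setF,\tau\right)$. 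As the initial $\tau$-open cover was arbitrary, this shows that every open cover of $A$ in $\left(\setF,\tau\right)$ admits a finite subcover, i.e. $A$ is compact in $\left(\setF,\tau\right)$.

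I do not expect any genuine obstacle: the only thing to be careful about is that the statement's hypothesis $\tau\subsetneq\topv$ is in fact strictly stronger than what the proof uses, since only the inclusion $\tau\subseteq\topv$ (not its strictness) enters. The strictness is presumably recorded in the hypothesis because it is what makes the conclusion interesting — it will later be combined with the pathological behaviour of compact sets in the order topology to transfer such behaviour to $\topg$ and $\topu$ — but for the present proposition it plays no role.
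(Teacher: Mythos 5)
Your proof is correct and follows essentially the same route as the paper's: pass the $\tau$-open cover to $\topv$ via the inclusion $\tau\subseteq\topv$, use compactness in $\topv$ to extract a finite subcover, and observe it is still a subcover in $\tau$. Your remark that only the inclusion (not its strictness) is used is accurate; the paper's proof likewise uses only $\tau\subset\topv$.
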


\begin{proof}
	Let $A\subset\setF$ a compact set in $(\setF, \topv)$ be given. Let $T\subset\tau$ be any open cover of $A$ in $\tau$, then since $\tau\subsetneq\topv$ and $T\subset\tau$ we have that $T\subset\topv$. Thus $T$ is an open cover of $A$ in $\topv$, however by choice $A$ is compact in $(\setF, \topv)$ so $T$ must admit a finite subcover $T'\subset T$. But $T'\subset T\subset\tau$ so $T'$ is also a finite subcover in $(\setF, \tau)$. This argument holds for any choice of $A\subset\setF$ compact in $(\setF, \topv)$; thus, if $A$ is compact in $(\setF, \topv)$ it is also compact in $(\setF, \tau)$.
\end{proof}

\begin{prp}
	$\topg \subsetneq \topv$ (resp. $\topu \subsetneq \topv$).
\end{prp}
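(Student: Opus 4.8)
The plan is to prove each of the two inclusions in two stages: first $\topg\subseteq\topv$ (resp. $\topu\subseteq\topv$), and then strictness by exhibiting one concrete set that is open in $\topv$ but lies in neither $\topg$ nor $\topu$. Since the two families of seminorms behave the same way for this argument, I would carry out the $\Gamma$-case in detail and record the $u$-case in parentheses, matching the style of the preceding proofs.

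For the inclusion $\topg\subseteq\topv$, let $O\in\topg$ and $x\in O$, and choose $r>0$ with $\pbg{x}{r}\subseteq O$. Put $n:=\mu(r)$ and recall that $\Gamn{n}$ is well-bounded, so it has a maximum element $M:=\max\Gamn{n}\in\setQ$. The point is that the seminorm $\gamval{\cdot}{n}$ depends only on the coefficients at rationals $\le M$: if $|y-x|_u<e^{-M}$ then $\lambda(y-x)>M$, so $(y-x)[q]=0$ for every $q\le M$, in particular for every $q\in\Gamn{n}$; hence $\gamval{y-x}{n}=0<r$, i.e. $y\in\pbg{x}{r}\subseteq O$. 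Thus the valuation ball of radius $e^{-M}$ about $x$ lies in $O$, so $O\in\topv$, giving $\topg\subseteq\topv$. The case $\topu\subseteq\topv$ is identical with the single threshold $1/r$ in place of $M$: if $|y-x|_u<e^{-1/r}$ then $\lambda(y-x)>1/r$, whence $\unval{y-x}{1/r}=0<r$ and $y\in\pbu{x}{r}$.

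For strictness I would take $B:=\{z\in\setF : |z|_u<1\}$, which is open in $\topv$ (it is the ball of radius $1$ about $0$ in the valuation metric); explicitly $B=\{z\in\setF : \lambda(z)>0\}$. Suppose, for contradiction, that $B\in\topg$; then $\pbg{0}{r}\subseteq B$ for some $r>0$. But $\Pi(r/2)\in\pbg{0}{r}$, since $\gamval{\Pi(r/2)}{\mu(r)}$ equals $r/2$ if $0\in\Gamn{\mu(r)}$ and equals $0$ otherwise, so in either case it is $<r$. On the other hand $\Pi(r/2)$ is a nonzero real number, so $\lambda(\Pi(r/2))=0$, hence $\Pi(r/2)\notin B$, a contradiction. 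The same element $\Pi(r/2)$ also witnesses $B\notin\topu$, since $\unval{\Pi(r/2)}{1/r}=r/2<r$. Combining with the inclusions above yields $\topg\subsetneq\topv$ and $\topu\subsetneq\topv$.

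I do not expect a genuine obstacle here; the one thing to keep straight is the direction of the valuation, namely that $|\cdot|_u$ being small forces $\lambda$ large, i.e. support pushed far to the right, which is precisely the part of an element that the truncated seminorms $\gamval{\cdot}{n}$ and $\unval{\cdot}{r}$ discard. The only other point worth a line is confirming that $\Gamn{n}$ is well-bounded (being a finite union of well-bounded sets), so that $\max\Gamn{n}$ exists.
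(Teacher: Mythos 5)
Your proof is correct and takes essentially the same route as the paper's: the inclusion is obtained by fitting a sufficiently small valuation ball about $x$ (the paper's $B(x,d^n)$ with $n>\max\Gamn{\mu(r)}$ is exactly your ball of radius $e^{-M}$) inside a given pseudo-ball, using that the truncated semi-norms ignore coefficients beyond $\max\Gamn{\mu(r)}$ (resp.\ beyond $1/r$). For strictness both arguments exhibit an order-open neighbourhood of $0$ containing no pseudo-ball, witnessed by an element with a small real coefficient at a fixed low exponent --- your $\Pi(r/2)$ plays the role of the paper's $\frac{r}{2}d^{\max\Gamn{1}}$ --- so there is no gap and nothing further to add.
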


\begin{proof}
	Let $G\subset\setF$ be open with respect to $\topg$ and fix $x\in G$. Then there exists $r>0$ in $\setR$ such that
$\pbg{x}{r}\subset G$.	Let $n>\max\{\Gam_{\mu(r)}\}$ which is possible because $\Gam_{\mu(r)}$ is the finite union of well-bounded sets and hence it is itself well-bounded. We will show that $B(x,d^n) \subset G$.
So let $y\in B(x, d^n)$ be given. Then clearly we have
$\abs{y-x}<d^n$ and hence, for any $q<n$ in $\setQ$, we have $(x-y)[q] = 0$. However, by our choice of $n$, we have that for every $q\in\Gamn{\mu(r)}$, $q<n$. Therefore, for every $q\in\Gamn{\mu(r)}$, $(y-x)[q]=0<r$. It follows that $y\in\pbg{x}{r}$. This holds for any $y\in B(x, d^n)$. It follows that
	\[
		B(x,d^n)\subset\pbg{x}{r}\subset G.
	\]
	
We have just shown that $\topg\subset\topv$; so it remains to show that there exists an $O\in\topv$ such that $O\notin\topg$. Choose $n>\max\{\Gamn{1}\}=q$, then
	\[
		\left(-d^n, d^n\right) = B(0, d^n)\in\topv.
	\]
	Now fix $r>0$ in $\setR$ and let $x= \frac{r}{2}d^q$. Then clearly $x\notin(-d^n, d^n)$ since by choice $d^q\gg d^n$. However
	\[
		\gamval{0-x}{\mu(r)} = \gamval{ \frac{r}{2}d^q}{\mu(r)} \leq \frac{r}{2} < r
	\]
	so $x\in\pbg{0}{r}$. Since our choice of $r$ was arbitrary we conclude that for every $r>0$ in \setR, $ \frac{r}{2}d^q\in\pbg{0}{r}$ but $ \frac{r}{2}d^q\notin \left(-d^n, d^n\right)$. It follows that for every $r>0$ in \setR,
	\[
		\pbg{0}{r}\not\subset \left(-d^n, d^n\right).
	\]
Thus, $(-d^n, d^n) \notin \topg$ and hence $\topv\not\subset\topg$.
\end{proof}

\begin{crl}
	Suppose $A\subset\setF$ is compact in $(\setF, \topv)$. Then $A$ is also compact in $(\setF, \topu)$ and $(\setF, \topg)$.
\end{crl}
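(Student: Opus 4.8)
The plan is to simply combine the two immediately preceding results. We have just established the Proposition that any topology $\tau$ on $\setF$ with $\tau\subsetneq\topv$ inherits all $\topv$-compact sets as $\tau$-compact sets, and we have also established the Proposition that $\topg\subsetneq\topv$ and $\topu\subsetneq\topv$. So there is essentially nothing left to do beyond invoking these two facts with the appropriate substitutions.

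Concretely, first I would apply the Proposition on compactness with $\tau=\topu$: since $\topu\subsetneq\topv$, any $A\subset\setF$ that is compact in $(\setF,\topv)$ is compact in $(\setF,\topu)$. Then I would repeat the argument verbatim with $\tau=\topg$: since $\topg\subsetneq\topv$, the same $A$ is compact in $(\setF,\topg)$ as well. This yields both assertions of the corollary.

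There is no real obstacle here — the corollary is a direct consequence of the two previous propositions, and the only thing to be careful about is making sure the hypothesis $\tau\subsetneq\topv$ of the compactness proposition is met, which is exactly the content of the proposition $\topg\subsetneq\topv$ (resp. $\topu\subsetneq\topv$). One could write the proof in a single sentence, but for clarity I would spell out both applications. A sample proof:

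\begin{proof}
	By the previous Proposition we have $\topu\subsetneq\topv$, so applying the Proposition on compactness with $\tau=\topu$ shows that any $A\subset\setF$ which is compact in $(\setF,\topv)$ is also compact in $(\setF,\topu)$. Likewise, since $\topg\subsetneq\topv$, applying the same Proposition with $\tau=\topg$ shows that $A$ is compact in $(\setF,\topg)$ as well.
\end{proof}
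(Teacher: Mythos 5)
Your proof is correct and is exactly the argument the paper intends: the corollary is stated without proof precisely because it follows by applying the compactness proposition with $\tau=\topu$ and $\tau=\topg$, whose hypotheses are supplied by the proposition $\topg\subsetneq\topv$ (resp. $\topu\subsetneq\topv$). Nothing is missing.
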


\begin{prp}
	There exist translation invariant metrics $\Dg$ and $\Dw$ that induce topologies which are equivalent to $\topg$ and $\topu$, respectively, on $\setF$.
\end{prp}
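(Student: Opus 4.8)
The plan is to write the two metrics down explicitly as weighted sums of the corresponding families of semi-norms, verify the metric axioms, and then show the induced topologies coincide with $\topg$ and $\topu$ by comparing neighbourhood bases at the origin, using the countable local bases of pseudo-balls from Proposition~\ref{clbut}.

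Concretely, I would fix an enumeration $\setQ=\{r_1,r_2,\ldots\}$ of the (countable) index set of the $u$-family and define
\[
	\dg{x}{y} := \sum_{n=1}^{\infty}\frac{1}{2^n}\cdot\frac{\gamval{x-y}{n}}{1+\gamval{x-y}{n}}, \qquad \dw{x}{y} := \sum_{n=1}^{\infty}\frac{1}{2^n}\cdot\frac{\unval{x-y}{r_n}}{1+\unval{x-y}{r_n}},
\]
with the convention $t/(1+t)=1$ for $t=\infty$, so that the terms of the second sum make sense even when $\unval{x-y}{r_n}=\infty$. Both series are dominated by $\sum_n 2^{-n}=1$, hence converge, and both expressions depend only on $x-y$, so translation invariance is immediate. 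I would then check the metric axioms: symmetry follows because each $\gamval{\cdot}{n}$ (resp. $\unval{\cdot}{r_n}$) is a semi-norm, so $\gamval{x-y}{n}=\gamval{y-x}{n}$; non-degeneracy holds because $\dg{x}{y}=0$ forces $\gamval{x-y}{n}=0$ for all $n$, and since each $q\in\setQ$ lies in $\Gamn{n}$ for $n$ large enough this gives $(x-y)[q]=0$ for all $q$, i.e. $x=y$ (and similarly $\dw{x}{y}=0$ forces $\unval{x-y}{r}=0$ for every $r\in\setQ$, whence $x=y$); and for the triangle inequality I would use that $\phi(t):=t/(1+t)$ is nondecreasing and subadditive on $[0,\infty]$, so that the semi-norm triangle inequality yields $\phi(\gamval{x-z}{n})\le\phi(\gamval{x-y}{n}+\gamval{y-z}{n})\le\phi(\gamval{x-y}{n})+\phi(\gamval{y-z}{n})$, and summing against the weights $2^{-n}$ gives $\dg{x}{z}\le\dg{x}{y}+\dg{y}{z}$; the identical computation works for $\Dw$.

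To show the metric topology of $\Dg$ equals $\topg$ (and that of $\Dw$ equals $\topu$), I would note that, both the metrics and the topologies being translation invariant, it suffices to sandwich metric balls at $0$ and pseudo-balls at $0$ inside one another. Each pseudo-ball contains a metric ball: given $q\in\setQ^{+}$, the single index-$\mu(q)$ term satisfies $2^{-\mu(q)}\,\frac{\gamval{y}{\mu(q)}}{1+\gamval{y}{\mu(q)}}\le\dg{0}{y}$, so a small enough $\dg{0}{y}$ forces $\gamval{y}{\mu(q)}<q$, giving $\bg{0}{\varepsilon}\subseteq\pbg{0}{q}$ for suitable $\varepsilon$ (for $\Dw$ one uses that $1/q\in\setQ$, say $1/q=r_m$, and that $\dw{0}{y}$ controls the index-$m$ term). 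Conversely each metric ball contains a pseudo-ball: given $\varepsilon>0$ pick $N$ with $\sum_{n>N}2^{-n}<\varepsilon/2$; since $\gamval{y}{n}\le\gamval{y}{N}$ for $n\le N$, any $y$ with $\gamval{y}{N}<\varepsilon/2$ has $\dg{0}{y}<\varepsilon$, so choosing $q\in\setQ^{+}$ with $\mu(q)\ge N$ and $q<\varepsilon/2$ gives $\pbg{0}{q}\subseteq\bg{0}{\varepsilon}$ (for $\Dw$, replace $N$ by $R:=\max\{r_1,\ldots,r_N\}\in\setQ$ and invoke monotonicity of $r\mapsto\unval{\cdot}{r}$ together with $1/q\ge R$). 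Feeding these inclusions through the countable local bases of Proposition~\ref{clbut} and translating shows the topologies agree.

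The main point requiring care is the extended-real behaviour of the $u$-family: one must check that $\phi$ remains nondecreasing and subadditive on $\setRext$ (so the triangle inequality for $\Dw$ survives), and, when fitting a metric ball inside a pseudo-ball $\pbu{0}{q}$, that the finitely many relevant semi-norms $\unval{y}{r_1},\ldots,\unval{y}{r_N}$ are automatically finite for $y\in\pbu{0}{q}$ — which they are, being bounded by $\unval{y}{1/q}<q$ as soon as $1/q\ge\max\{r_1,\ldots,r_N\}$. Everything else is routine bookkeeping.
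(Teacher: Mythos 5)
Your proof is correct, but it takes a different route from the paper. The paper's own proof is a one-line appeal to the general metrization theorem for topological vector spaces (Theorem 1.24 in Rudin's \emph{Functional Analysis}): having already shown that $(\setF,\topg)$ and $(\setF,\topu)$ are topological vector spaces with countable local bases at $0$ (Proposition \ref{clbut}), metrizability by a translation-invariant metric follows abstractly. You instead construct the metrics explicitly as weighted sums $\sum_n 2^{-n}\phi(\genval{\cdot}{}{n})$ with $\phi(t)=t/(1+t)$ and verify the axioms and the equivalence of topologies by sandwiching metric balls and pseudo-balls at $0$ --- which is essentially the content of the Example the paper gives immediately after this proposition (there the details are delegated to Theorems 3.32 and 3.33 of the Levi-Civita paper). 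The trade-off: the paper's citation is shorter and hides all bookkeeping inside the general theorem, while your argument is self-contained and actually exhibits the metrics, at the cost of having to handle the extended-real values of the $\unval{\cdot}{r}$ family by hand (your convention $\phi(\infty)=1$, the check that $\phi$ stays monotone and subadditive on $\setRext$, and the observation that the finitely many relevant semi-norms are finite on a small enough pseudo-ball are exactly the points the abstract route lets one skip). One cosmetic difference: you index the $\Dw$ sum over an enumeration of $\setQ$, whereas the paper's example sums over $k\in\setN$ only; both give the same topology since the natural numbers are cofinal in $\setQ$ and $r\mapsto\unval{\cdot}{r}$ is nondecreasing.
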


\begin{proof}
	This follows from the fact that both $(\setF, \topg)$ and $(\setF, \topu)$ are topological vector spaces with countable local bases, for details see Theorem 1.24 in \cite{rudin1}.
\end{proof}

\begin{xpl}
	Let
	\[
		\dg{x}{y} := \sum\limits_{k=1}^{\infty}2^{-k}\frac{\gamval{x-y}{k}}{1+\gamval{x-y}{k}}
	\]
	and let
	\[
		\du{x}{y} := \sum\limits_{k=1}^{\infty}2^{-k}\frac{\unval{y-x}{k}}{1+\unval{y-x}{k}}.
	\]
	Then $\Dg$ and $\Du$ are translation invariant metrics on $\setF$ that induce topologies equivalent to $\topg$ and $\topu$, respectively; see the proofs of Theorem 3.32 and Theorem 3.33 in \cite{rstopology09}. We note that, in both infinite sums above, the $2^{-k}$ factor could be replace with $c^{-k}$ where $c$ is any real number greater than 1 and we would still obtain translation invariant metrics.
\end{xpl}

\begin{prp}\label{prpref1}
	$\topg\subset\topu$.
\end{prp}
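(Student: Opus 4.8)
The plan is to verify the inclusion directly from the definitions: given $G\in\topg$ and $x\in G$, I will exhibit a $\topu$-pseudo-ball centred at $x$ and contained in $G$, which makes $x$ a $\topu$-interior point of $G$; since $x\in G$ is arbitrary, this shows $G\in\topu$. So fix $G\in\topg$ and $x\in G$; by definition there is $r>0$ in $\setR$ with $\pbg{x}{r}\subset G$. The set $\Gamn{\mu(r)}$ is a finite union of well-bounded subsets of $\setQ$, hence is itself well-bounded, so it has a maximum element $M:=\max\Gamn{\mu(r)}\in\setQ$; in particular $\Gamn{\mu(r)}\subseteq(-\infty,M]$. The crucial observation is the pointwise bound
\[
\gamval{z}{\mu(r)}\ \le\ \unval{z}{M}\qquad(z\in\setF),
\]
which holds because $\gamval{z}{\mu(r)}=\max\{\abs{z[q]}:q\in\Gamn{\mu(r)}\}$ is a maximum taken over a subset of $\{q\in\setQ:q\le M\}$ and is therefore bounded above by $\sup\{\abs{z[q]}:q\le M\}=\unval{z}{M}$ (the inequality being trivial when the right-hand side is $+\infty$).

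Next I would pick the radius. Choose $s\in\setQ^{+}$ small enough that $s\le r$ and $1/s\ge M$; such $s$ exists (any positive rational not exceeding $\min\{r,1/M\}$ works when $M>0$, and any positive rational $\le r$ works when $M\le 0$), and it should be kept in mind that the index appearing in $\pbu{x}{s}$ is $1/s$ itself, not $\mu(s)$. Since $M\le 1/s$, the monotonicity of the family $\unval{\cdot}{\cdot}$ in its index (noted earlier) gives $\unval{z}{M}\le\unval{z}{1/s}$ for every $z\in\setF$, and combining with the previous bound, $\gamval{z}{\mu(r)}\le\unval{z}{1/s}$ for all $z\in\setF$.

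It then remains only to chain the inequalities: for $y\in\pbu{x}{s}$ we have $\unval{y-x}{1/s}<s$, whence
\[
\gamval{x-y}{\mu(r)}\ \le\ \unval{x-y}{1/s}\ =\ \unval{y-x}{1/s}\ <\ s\ \le\ r ,
\]
so $y\in\pbg{x}{r}\subset G$. Thus $\pbu{x}{s}\subset G$, which completes the proof that $\topg\subset\topu$. I do not anticipate a genuine obstacle; the only point that needs care is recognizing that the well-boundedness of $\Gamn{\mu(r)}$ supplies a finite rational upper bound $M$ for its elements, and it is precisely this that lets the ``supremum over a half-line'' semi-norm $\unval{\cdot}{M}$ dominate the ``maximum over $\Gamn{\mu(r)}$'' semi-norm $\gamval{\cdot}{\mu(r)}$; once that is in hand the argument reduces to a single use of the monotonicity of $\unval{\cdot}{\cdot}$ in its index, plus the harmless fact that $\unval{z}{M}$ may equal $+\infty$ for some $z\in\setF$ without disrupting the estimates.
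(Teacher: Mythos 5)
Your proof is correct and follows essentially the same route as the paper's: both isolate the maximum $M$ of $\Gamn{\mu(r)}$ via well-boundedness and then choose the $\topu$-radius small enough that the associated index $1/s$ exceeds $M$ while the radius itself stays below $r$. Your explicit domination inequality $\gamval{z}{\mu(r)}\le\unval{z}{M}$ is just a slightly cleaner packaging of the same estimate the paper carries out pointwise.
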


\begin{proof}
	Let $O\in\topg$ be given and fix $x\in O$. Then there exists $r>0$ in $\setR$ such that $\pbg{x}{r}\subset O$. Let $q_0 = \max\{\Gamn{\mu(r)}\}$, which exists by the well-boundedness of $\Gamn{\mu(r)}$. Pick $q_1>\max\{q_0, \frac{1}{r}\}$ we claim that
	\[
		\pbu{x}{\frac{1}{q_1}}\subset \pbg{x}{r}.
	\]
	So fix $y\in\pbu{x}{\frac{1}{q_1}}$, then $\unval{y-x}{q_1}< \frac{1}{q_1} < r$. Thus for every $q\leq q_1$, we have that $(y-x)[q]<r$. However, by selection, we have that $q_1>q_0= \max\{\Gamn{\mu(r)}\}$; thus, for every $q\in\Gamn{\mu(r)}$ we have that $q\leq q_0< q_1$ and hence, for every $q\in\Gamn{\mu(r)}$, we have that $(y-x)[q]<r$. It that $\gamval{y-x}{\mu(r)}<r$ and hence $y\in\pbg{x}{r}$. Therefore, as claimed, $\pbu{x}{\frac{1}{q_1}}\subset \pbg{x}{r}\subset O$. Thus, $O\in\topu$ and hence $\topg\subseteq\topu$.
\end{proof}

\begin{prp}
	Let $\Gam$ and $\Omega$ be a finite well-bounded partition and an infinite well-bounded partition of $\setQ$, respectively. Then $\topg \subsetneq \Top{\Omega}$.
\end{prp}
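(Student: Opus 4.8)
The plan is to prove the two halves of $\topg\subsetneq\Top{\Omega}$ separately: first the inclusion $\topg\subseteq\Top{\Omega}$, which should be a routine adaptation of Proposition~\ref{prpref1}, and then strictness, which is where the \emph{finiteness} of the blocks of $\Gam$ must be played against the presence of an \emph{infinite} block in $\Omega$. Throughout I would write $\Omega_n:=\bigcup_{i=1}^{n}\omega_i$, in analogy with the notation $\Gamn{n}$, and recall that the $\Omega_n$ form an increasing sequence of well-bounded sets with $\bigcup_{n}\Omega_n=\setQ$, and likewise for the $\Gamn{n}$, with the extra feature that here each $\Gamn{n}$ is a \emph{finite} subset of $\setQ$ since $\Gam$ is a finite well-bounded partition.

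For the inclusion, let $O\in\topg$ and $x\in O$, and pick $r>0$ in $\setR$ with $\pbg{x}{r}\subset O$. Since $\Gamn{\mu(r)}$ is finite and $\bigcup_{m}\Omega_m=\setQ$ with the $\Omega_m$ increasing, there is an $M\in\setN$ with $\Gamn{\mu(r)}\subseteq\Omega_M$. I would then choose $r'>0$ with $r'\le r$ and $\mu(r')\ge M$ (possible because $\mu(r')\to\infty$ as $r'\to 0^{+}$), so that $\Omega_{\mu(r')}\supseteq\Omega_M\supseteq\Gamn{\mu(r)}$. For any $y\in\pb{\Omega}{x}{r'}$ one then has $\abs{(x-y)[q]}<r'$ for every $q\in\Omega_{\mu(r')}$, hence for every $q\in\Gamn{\mu(r)}$, and since $\Gamn{\mu(r)}$ is finite this gives $\gamval{x-y}{\mu(r)}<r'\le r$, i.e. $y\in\pbg{x}{r}\subset O$. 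Thus $\pb{\Omega}{x}{r'}\subset O$ and $O\in\Top{\Omega}$; the $\mu(\cdot)$-bookkeeping here is the same as in Proposition~\ref{prpref1} and the pseudo-ball monotonicity proposition.

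For strictness, I would fix $j_0$ with $\omega_{j_0}$ infinite, so that $\Omega_{j_0}$ is an infinite well-bounded subset of $\setQ$, choose $r>0$ in $\setR$ small enough that $\mu(r)\ge j_0$, and set $O:=\pb{\Omega}{0}{r}$, which is $\Top{\Omega}$-open (pseudo-balls are open, by the argument in the pseudo-ball monotonicity proposition, applied to $\Omega$). I claim $O\notin\topg$. By Proposition~\ref{clbut} it suffices to show that no $\pbg{0}{q}$ with $q\in\setQ^{+}$ is contained in $O$. Fix such a $q$; since $\Gamn{\mu(q)}$ is finite and $\Omega_{j_0}$ is infinite, pick $q^{*}\in\Omega_{j_0}\setminus\Gamn{\mu(q)}$ and let $y\in\setF$ be defined by $y[q^{*}]=r$ and $y[q]=0$ for $q\ne q^{*}$ (i.e. $y=r\,d^{q^{*}}$). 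Then $\gamval{y}{\mu(q)}=0<q$, so $y\in\pbg{0}{q}$; but $q^{*}\in\Omega_{j_0}\subseteq\Omega_{\mu(r)}$ and $y[q^{*}]=r$, so $\genval{y}{\Omega}{\mu(r)}\ge r$ and $y\notin O$. Hence $\pbg{0}{q}\not\subset O$ for every $q\in\setQ^{+}$, so $O\notin\topg$, and combining with the inclusion gives $\topg\subsetneq\Top{\Omega}$.

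The routine parts are the $\mu(\cdot)$ estimates and the openness of pseudo-balls; the conceptual crux — and the step I expect to need the most care in stating cleanly — is the strictness argument, namely recognizing the right invariant: a finite well-bounded partition only ever constrains finitely many coordinates at each level $n$, whereas an infinite block of $\Omega$ forces infinitely many coordinates under control already at the single level $j_0$, so a single basis vector supported at a well-chosen point of that block witnesses the failure of $\topg\supseteq\Top{\Omega}$. As an alternative to the direct construction, since both spaces are metrizable topological vector spaces with countable local bases one may instead exhibit the sequence $d^{q_k}$, where $q_1>q_2>\cdots$ is a strictly decreasing sequence in $\Omega_{j_0}$ (which exists by infinitude and well-boundedness of $\Omega_{j_0}$): it converges to $0$ in $\topg$ since each finite $\Gamn{n}$ contains only finitely many of the $q_k$, but $\genval{d^{q_k}}{\Omega}{j_0}=1$ for all $k$, so it does not converge to $0$ in $\Top{\Omega}$.
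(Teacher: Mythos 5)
Your proposal is correct and follows essentially the same route as the paper: the inclusion is proved by absorbing the finite set $\Gamn{\mu(r)}$ into some $\Omega_M$ and shrinking the $\Omega$-pseudo-ball radius accordingly, and strictness is witnessed by a scaled $d^{q^{*}}$ with $q^{*}$ chosen in an infinite block of $\Omega$ but outside the finite set $\Gamn{\mu(q)}$, exactly as in the paper (which uses $\frac{2}{N}d^{q_0}$). Your closing alternative via the null sequence $d^{q_k}$ along a strictly decreasing sequence in the infinite block is a valid and slightly slicker variant, but the main argument matches the paper's.
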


\begin{proof}
	First we show that $\topg\subset\Top{\Omega}$. So let $O\in\topg$ and fix $x\in O$. Then there exists $\epsilon>0$ in $\setR$ such that $\pbg{x}{\epsilon}\subseteq O$. Since $\Gam$ is a finite partition we know that $\Gamn{\mu(\epsilon)}$ is a finite set and thus there exists $N_0\in\setN$ such that $\Gamn{\mu(\epsilon)} \subset \Omega_{N_0}$. Let $N_1\in\setN$ be large enough so that $\frac{1}{N_1}<\epsilon$; and let $N=\max\left\{N_0, N_1\right\}$. We claim that $\pb{\Omega}{x}{\frac{1}{N}}\subset \pbg{x}{\epsilon}$. Let  $y\in\pb{\Omega}{x}{\frac{1}{N}}$ be given. By definition, we have that
	\[
		\genval{y-x}{\Omega}{N} < \frac{1}{N} \leq \frac{1}{N_1} < \epsilon.	
	\]
	So for every $q\in\Omega_{N}$, we have that
$|(y-x)[q]| < \epsilon$.
Since $\Gamn{\mu(\epsilon)} \subset \Omega_{N_0}\subset \Omega_{N}$, it follows that, for every $q\in\Gamn{\mu(\epsilon)}$, we have that
	\[
		|(y-x)[q]| < \epsilon.
	\]
Thus,
	\[
		\gamval{y-x}{\mu(\epsilon)} < \epsilon
	\]
and hence $y\in\pbg{x}{\epsilon}$. Hence $\pb{\Omega}{x}{\frac{1}{N}}\subset \pbg{x}{\epsilon}\subseteq O$. Thus, we have shown that, for every $x\in O$, there exists $N\in\setN$ such that
	\[
	\pb{\Omega}{x}{\frac{1}{N}} \subseteq O.
	\]
	This proves that $\topg \subset \Top{\Omega}$. To prove that the two topologies are not equal, we observe that since $\Omega$ is non-finite there exists $N\in\setN$ such that for all $n\geq N$, $\Omega_{n}$ has infinitely many elements. Consider the pseudo-ball $\pb{\Omega}{0}{\frac{1}{N}} \in \Top{\Omega}$. We will show that $\pb{\Omega}{0}{\frac{1}{N}}\not\in \topg$. Since we have already shown that the family of pseudo-balls $\left\{\pbg{0}{q}\mid|q\in\setQ^{+}\right\}$ is a countable local basis for $\topg$ at $0$, it is enough to prove that for every $q\in\setQ^{+}$, $\pbg{0}{q}\not\subset\pb{\Omega}{0}{\frac{1}{N}}$. So fix a $q\in\setQ^{+}$. Since $\Gamn{\mu(q)}$ is a finite set and $\Omega_{N}$ is not, there must be $q_0\in\Omega_{N}\setminus\Gamn{\mu(q)}$; let $x_0 = \frac{2}{N}d^{q_0}$. We have that
	\[
		\gamval{x_0}{\mu(q)} = \sup\left\{\abs{x_0[q']}\middle|q'\in\Gamn{\mu(q)}\right\} = 0 < q
	\]
	because $x_0[q']=0$ for all $q'\in\Gamn{\mu(q)}$. However, we also have that
	\[
		\genval{x_0}{\Omega}{N} = \sup\left\{\abs{x_0[q']}\middle|q'\in \Omega_{N}\right\} = \frac{2}{N} > \frac{1}{N}.
	\]
	Thus, for every $q\in\setQ^{+}$, there is $x_0\in\setF$ such that $x_0\in\pbg{0}{q}$ but $x_0 \not\in \pb{\Omega}{0}{\frac{1}{N}}$. It follows that $\Top{\Omega}\not\subset \topg$ and hence $\topg \subsetneq \Top{\Omega}$.
\end{proof}

\begin{lmm}\label{lemmaref1}
	For every $q\in\setQ^{+}$ and for every $n\in\setN$ there exists $q'\in(0, q]\cap\setQ$ such that $q'\not\in\Gamn{n}$.
\end{lmm}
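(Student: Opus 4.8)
The plan is to exploit the mismatch between the order type of $\Gamn{n}$ and that of the interval $(0,q]\cap\setQ$. The engine of the argument is the elementary observation that \emph{any subset of a well-bounded set is itself well-bounded}: if $S$ is well-bounded and $T\subseteq S$, then any nonempty $A\subseteq T$ is in particular a nonempty subset of $S$, hence has a maximum element, and that element lies in $A$ since $A\subseteq T$; thus $T$ is well-bounded. I would state and prove this little fact first, since it is used twice.

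Next I would record that $\Gamn{n}=\bigcup_{i=1}^{n}\gamn{i}$ is well-bounded, being a finite union of well-bounded sets — this is exactly the fact already invoked in the proof that $\topg\subsetneq\topv$, so it can be cited rather than reproved.

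The key step is then to observe that $(0,q]\cap\setQ$ is \emph{not} well-bounded: its nonempty subset $(0,q)\cap\setQ$ has no maximum, since for any $r\in(0,q)\cap\setQ$ the rational $\tfrac{r+q}{2}$ again lies in $(0,q)\cap\setQ$ and is strictly larger. (Equivalently, one may exhibit the strictly increasing sequence $q\bigl(1-\tfrac1k\bigr)$, $k\ge 2$, inside $(0,q]\cap\setQ$ with supremum $q$ not attained.)

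Finally I would put these together by contradiction: if no such $q'$ existed, then $(0,q]\cap\setQ\subseteq\Gamn{n}$, so by the subset fact $(0,q]\cap\setQ$ would be well-bounded, contradicting the previous step. Hence there must be some $q'\in(0,q]\cap\setQ$ with $q'\notin\Gamn{n}$. I do not anticipate any real obstacle here; the only points requiring a little care are stating the "subset of a well-bounded set is well-bounded" lemma cleanly and verifying that the chosen witness subset genuinely lies in the half-open interval and genuinely fails to have a maximum.
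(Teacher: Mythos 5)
Your proposal is correct and follows the same route as the paper: assume the conclusion fails, deduce $(0,q]\cap\setQ\subseteq\Gamn{n}$, and derive a contradiction from the fact that $\Gamn{n}$ is well-bounded while the interval is not. The only difference is that you spell out the two supporting facts (subsets of well-bounded sets are well-bounded; $(0,q)\cap\setQ$ has no maximum) that the paper's terser proof leaves implicit.
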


\begin{proof}
	Suppose otherwise. Then there exist $q\in\setQ$ and $n\in\setN$ such that for every $q'\in(0, q]\cap\setQ$, $q'\in\Gamn{n}$. As an immediate consequence we have that $(0, q]\cap\setQ\subset\Gamn{n}$, which contradicts the fact that intervals in $\setQ$ are not well-bounded. Thus, no such $q$ and $n$ can exist.
\end{proof}

\begin{prp}\label{prpref2}
	$\topu\not\subset\topg$.
\end{prp}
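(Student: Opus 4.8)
The plan is to exhibit a single set that is $\topu$-open but not $\topg$-open; since Proposition~\ref{prpref1} already gives $\topg\subseteq\topu$, this simultaneously yields the strict inclusion $\topg\subsetneq\topu$. Fix any $q\in\setQ^{+}$ and consider the pseudo-ball $\pbu{0}{q}$, which lies in $\topu$. It remains only to show $\pbu{0}{q}\notin\topg$. By the corollary to Proposition~\ref{clbut}, the family $\left\{\pbg{0}{q'}\mid q'\in\setQ^{+}\right\}$ is a local base for $\topg$ at $0$, and $0\in\pbu{0}{q}$; hence it suffices to prove that for every $q'\in\setQ^{+}$ one has $\pbg{0}{q'}\not\subset\pbu{0}{q}$.

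To establish this, fix $q'\in\setQ^{+}$ and apply Lemma~\ref{lemmaref1} with its first parameter taken to be $1/q$ and its second taken to be $\mu(q')=\left\lceil 1/q'\right\rceil$, obtaining a rational $q_0\in(0,1/q]$ with $q_0\notin\Gamn{\mu(q')}$. Put $x_0:=2q\,d^{q_0}$, an element of $\setF$ whose support is the singleton $\{q_0\}$. On one hand, since $q_0\notin\Gamn{\mu(q')}$ we have $x_0[q'']=0$ for every $q''\in\Gamn{\mu(q')}$, so $\gamval{x_0}{\mu(q')}=0<q'$ and therefore $x_0\in\pbg{0}{q'}$. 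On the other hand, $q_0\leq 1/q$ and $\abs{x_0[q_0]}=2q$, so $\unval{x_0}{1/q}\geq 2q>q$, whence $x_0\notin\pbu{0}{q}$. Thus $x_0$ witnesses $\pbg{0}{q'}\not\subset\pbu{0}{q}$; as $q'$ was arbitrary, no basic $\topg$-neighbourhood of $0$ is contained in $\pbu{0}{q}$, and consequently $\pbu{0}{q}\notin\topg$. This proves $\topu\not\subset\topg$.

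The computation is short once the right test element is in hand, so the real content is the choice of $x_0$, and that is also the step I expect to be the crux: the seminorm $\gamval{\cdot}{\mu(q')}$ only detects values on the well-bounded set $\Gamn{\mu(q')}$, while $\unval{\cdot}{1/q}$ detects values on all of $(-\infty,1/q]$ (a set that fails to be well-bounded), and Lemma~\ref{lemmaref1} is precisely what lets us plant a unit spike $d^{q_0}$ at a rational point $q_0$ sitting inside that interval but missed by $\Gamn{\mu(q')}$. In writing this out I would be careful only with the matching of the two parameters fed to Lemma~\ref{lemmaref1} and with the observations that $d^{q_0}\in\setF$ (its support $\{q_0\}$ is well-ordered) and that $\gamval{x_0}{\mu(q')}$ really equals $0$ because $q_0$ has been kept out of $\Gamn{\mu(q')}$.
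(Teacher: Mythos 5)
Your proposal is correct and follows essentially the same route as the paper: reduce to showing no basic $\topg$-ball at $0$ fits inside a fixed $\topu$-pseudo-ball at $0$, then use Lemma~\ref{lemmaref1} to plant a spike $c\,d^{q_0}$ at a point $q_0$ inside the relevant interval but outside $\Gamn{\mu(q')}$. The only differences from the paper's proof are the parametrization of the $\topu$-pseudo-ball (you use $\pbu{0}{q}$ where the paper uses $\pbu{0}{1/q}$) and the correspondingly rescaled amplitude of the test element, both immaterial.
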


\begin{proof}
	Fix $q\in\setQ^{+}$ and consider the pseudo-ball $\pbu{0}{\frac{1}{q}}\in\topu$. We claim that for every $r\in\setQ^{+}$, $\pbg{0}{r}\not\subset\pbu{0}{\frac{1}{q}}$. Since $\left\{\pbg{0}{r}\middle|r\in\setQ^{+}\right\}$ forms a local base for $\topg$ at $0$, proving our claim will be sufficient to establish that $\pbu{0}{\frac{1}{q}}\not\in\topg$. To prove our claim, let $r\in\setQ^{+}$. Then
	\begin{align*}
		\pbg{0}{r} &= \left\{x\in\setF\middle|\gamval{x}{\mu(r)}<r\right\}\\
		&= \left\{x\in\setF\mid|\sup\left\{\abs{x[q]}\middle|q\in\Gamn{\mu(r)}\right\}<r\right\}.
	\end{align*}
	 By lemma \ref{lemmaref1}, we have that $(-\infty, q]\cap(\setQ\setminus\Gamn{\mu(r)}) \neq \emptyset$; so pick $q'\in(-\infty, q]\cap(\setQ\setminus\Gamn{\mu(r)})$ and let $x=\frac{2}{q}d^{q'}$. We see that
	 \[
	 	x\in\pbg{0}{r}
	 \]
	 since $x[q] = 0$ for all $q\in\Gamn{\mu(r)}$, but
	 \[
	 	x\not\in\pbu{0}{\frac{1}{q}}
	 \]
	 because $q'\in(-\infty, q]\cap\setQ$ and $\abs{x[q']} = \frac{2}{q} > \frac{1}{q}$. Thus the claim (and hence the proposition) is proved.
\end{proof}

Combining the results of Proposition \ref{prpref1} and Proposition \ref{prpref2}, we readily obtain the following corollary.
\begin{crl}
	$\topg\subsetneq\topu$.
\end{crl}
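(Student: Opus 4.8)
The plan is essentially bookkeeping: this corollary is an immediate consequence of the two propositions that precede it, so no fresh argument is required. First I would invoke Proposition \ref{prpref1}, which gives the containment $\topg\subseteq\topu$. Then I would invoke Proposition \ref{prpref2}, which exhibits, for any fixed $q\in\setQ^{+}$, the pseudo-ball $\pbu{0}{\frac{1}{q}}$ as an explicit member of $\topu$ that is not a member of $\topg$; this establishes $\topu\not\subseteq\topg$.

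Putting the two facts side by side, the containment $\topg\subseteq\topu$ cannot be an equality, and therefore $\topg\subsetneq\topu$, which is exactly the claim. The only point deserving a moment's care is that Proposition \ref{prpref2} really does produce a set that is $\topu$-open but not $\topg$-open, rather than a merely formal non-containment of the two topologies as families of sets; but this is precisely what its proof accomplishes, since it uses the countable local base $\left\{\pbg{0}{r}\middle|r\in\setQ^{+}\right\}$ for $\topg$ at $0$ (from Corollary following Proposition \ref{clbut}) to certify that $\pbu{0}{\frac{1}{q}}$ contains no $\topg$-neighbourhood of $0$.

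Accordingly, I expect the proof to be two sentences long: cite Proposition \ref{prpref1} for $\subseteq$, cite Proposition \ref{prpref2} for the failure of the reverse inclusion, and conclude $\topg\subsetneq\topu$. There is no real obstacle to overcome here; the substantive content has already been carried by Lemma \ref{lemmaref1} and the construction of the witness element $x=\frac{2}{q}d^{q'}$ in the proof of Proposition \ref{prpref2}.
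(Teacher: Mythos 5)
Your proposal is correct and is exactly the paper's argument: the corollary is stated immediately after the sentence ``Combining the results of Proposition \ref{prpref1} and Proposition \ref{prpref2}, we readily obtain the following corollary,'' so the paper likewise just cites the inclusion from Proposition \ref{prpref1} and the non-containment from Proposition \ref{prpref2}. No further comment is needed.
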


\begin{prp}
	Let $\Gam$ and $\Omega$ be distinct finite well-bounded partitions of $\setQ$. Then $\topg = \Top{\Omega}$.
\end{prp}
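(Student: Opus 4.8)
The plan is to establish the two inclusions $\topg\subseteq\Top{\Omega}$ and $\Top{\Omega}\subseteq\topg$ separately. Since the hypotheses are symmetric in $\Gam$ and $\Omega$ (both are finite well-bounded partitions of $\setQ$), it suffices to prove only the first inclusion and then invoke symmetry for the second. Because $\topg$ and $\Top{\Omega}$ are vector topologies and, by the corollary following Proposition \ref{clbut}, the families $\left\{\pbg{x}{q}\mid q\in\setQ^{+}\right\}$ and $\left\{\pb{\Omega}{x}{q}\mid q\in\setQ^{+}\right\}$ are local bases at each point, the inclusion $\topg\subseteq\Top{\Omega}$ will follow once we show that for every $r>0$ in $\setR$ there is an $N\in\setN$ with $\pb{\Omega}{0}{\frac{1}{N}}\subseteq\pbg{0}{r}$ (the statement at a general $x$ then follows by translation invariance). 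This is exactly the kind of containment established in the proof of the preceding proposition, and the argument is essentially a verbatim repetition of it.

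First I would record the crucial consequence of the finiteness hypothesis on $\Gam$: since each block $\gamn{i}$ is finite, $\Gamn{\mu(r)}=\bigcup_{i=1}^{\mu(r)}\gamn{i}$ is a \emph{finite} subset of $\setQ$. Now $\Omega$ is a partition with $\Omega_1\subseteq\Omega_2\subseteq\cdots$ and $\bigcup_{n}\Omega_n=\setQ$, so this finite set is contained in a single member of the chain: there is $N_0\in\setN$ with $\Gamn{\mu(r)}\subseteq\Omega_{N_0}$. Choosing $N:=\max\{N_0,\mu(r)\}$ we have $\frac{1}{N}\leq\frac{1}{\mu(r)}<r$, and for any $y\in\pb{\Omega}{0}{\frac{1}{N}}$ we get $\genval{y}{\Omega}{N}<\frac{1}{N}\leq r$, so $\abs{y[q]}<r$ for every $q\in\Omega_N$, and in particular for every $q\in\Gamn{\mu(r)}\subseteq\Omega_N$. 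Hence $\gamval{y}{\mu(r)}<r$, i.e.\ $y\in\pbg{0}{r}$, proving $\pb{\Omega}{0}{\frac{1}{N}}\subseteq\pbg{0}{r}$ and therefore $\topg\subseteq\Top{\Omega}$.

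Finally, exchanging the roles of $\Gam$ and $\Omega$ — which is legitimate precisely because $\Omega$ is also a finite partition, so that $\Omega_{\mu(r)}$ is finite and thus contained in some $\Gamn{N_0}$ — yields $\Top{\Omega}\subseteq\topg$, and the two topologies coincide. I do not expect any genuine obstacle here; the one point worth emphasizing is that the finiteness of \emph{both} partitions is what makes the comparison symmetric. If $\Gam$ were allowed to be infinite, the set $\Gamn{\mu(r)}$ could be infinite and fail to be contained in any single $\Omega_{N_0}$, which is exactly why an infinite partition produces a strictly finer topology in the previous proposition.
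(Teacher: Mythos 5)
Your proposal is correct and follows essentially the same route as the paper's proof: use the finiteness of one partition to place the finite set $\Gamn{\mu(r)}$ inside a single $\Omega_{N_0}$, nest one pseudo-ball inside the other, and obtain the reverse inclusion by exchanging the roles of the two partitions. The only (harmless) differences are that you reduce to pseudo-balls centred at $0$ via translation invariance and prove the inclusions in the opposite order, whereas the paper works at an arbitrary centre $x$ and starts from $\Top{\Omega}\subseteq\topg$.
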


\begin{proof}
	Let $O\in\Top{\Omega}$ and fix $x\in O$. Then there exists $\epsilon>0$ in $\setR$ such that
	\[
		\pb{\Omega}{x}{\epsilon}\subseteq O.
	\]
	$\Omega$ is a finite partition so $\Omega_{\mu(\epsilon)}$ contains only a finite number of elements. Moreover, since $\Gam$ is also a partition of $\setQ$, for each $q\in\Omega_{\mu(\epsilon)}$ there exists $N_q\in\setN$ such that $q\in\Gamn{N_q}$. Let
	\[
		N=\max\left\{N_q\middle|q\in\Omega_{\mu(\epsilon)}\right\}.
	\]
Then $\Omega_{\mu(\epsilon)}\subset\Gamn{N}$
	because for every $q\in\Omega_{\mu(\epsilon)}$, we have that $q\in\Gamn{N_q}\subseteq\Gamn{N}$.
	Now let $\delta = \min\left\{\frac{1}{N}, \epsilon\right\}$. We claim that
	\[
		\pbg{x}{\delta}\subseteq\pb{\Omega}{x}{\epsilon}.
	\]
	So let $y\in\pbg{x}{\delta}$ be given. Then we have that
$\gamval{y-x}{\mu(\delta)} < \delta$,
and hence, for every $q\in\Gamn{\mu(\delta)}$, we have that
$\abs{(y-x)[q]} < \delta$.
But $\delta\leq\frac{1}{N}$ so $\mu(\delta)\geq N$, and hence $\Gamn{N}\subseteq\Gamn{\mu(\delta)}$.

We have already shown that $\Omega_{\mu(\epsilon)}\subseteq\Gamn{N}\subseteq\Gamn{\mu(\delta)}$. So for every $q\in\Omega_{\mu(\epsilon)}$, we have that
$\abs{(y-x)[q]} < \delta \leq \epsilon$.
It follows that
	\[
		\genval{y-x}{\Omega}{\mu(\epsilon)} < \epsilon,
	\]
and hence $y\in\pb{\Omega}{x}{\epsilon}$. The above argument holds for any $y\in\pbg{x}{\delta}$; it follows that $\pbg{x}{\delta}\subset\pb{\Omega}{x}{\epsilon}\subset O$. Thus, we have shown that for any $x\in O$ there exists $\delta>0$ in $\setR$ such that $\pbg{x}{\delta}\subset O$, and hence $O\in\topg$. Since $O$ is an arbitrary element of $\Top{\Omega}$, we infer that $\Top{\Omega}\subseteq\topg$. A symmetric argument shows that $\topg\subseteq\Top{\Omega}$, and hence $\topg = \Top{\Omega}$.
\end{proof}

\begin{crl}
	All finite well-bounded partitions of $\setQ$ induce the same topology on $\setF$.
\end{crl}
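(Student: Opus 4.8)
The plan is to read this off immediately from the preceding Proposition; the only care needed is that the Proposition is phrased for \emph{distinct} partitions, so the trivial equality case must be handled separately. By Example \ref{diag_part} there is at least one finite well-bounded partition of $\setQ$; fix one and call it $\Gam^{0}$, with induced topology $\Top{\Gam^{0}}$. Now let $\Gam$ be an arbitrary finite well-bounded partition of $\setQ$. If $\Gam = \Gam^{0}$ then of course $\topg = \Top{\Gam^{0}}$. If $\Gam \neq \Gam^{0}$, then $\Gam$ and $\Gam^{0}$ are distinct finite well-bounded partitions of $\setQ$, so the preceding Proposition applies verbatim and yields $\topg = \Top{\Gam^{0}}$.

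In either case every finite well-bounded partition $\Gam$ of $\setQ$ satisfies $\topg = \Top{\Gam^{0}}$, and since $\Gam^{0}$ was an arbitrary choice of such a partition, any two finite well-bounded partitions of $\setQ$ induce one and the same topology on $\setF$. This justifies speaking of \emph{the} topology on $\setF$ induced by a (any) finite well-bounded partition; in the rest of the paper I would fix the notation $\topw$ for this common topology.

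There is no genuine obstacle in this corollary — all the work is already contained in the preceding Proposition, whose proof establishes the mutual containments $\Top{\Omega} \subseteq \topg$ and $\topg \subseteq \Top{\Omega}$ by exhibiting, for each pseudo-ball of one family, a pseudo-ball of the other family contained in it (using finiteness of the $\Gamn{n}$'s to absorb finitely many indices). The corollary merely packages that statement, with the cosmetic addition of the $\Gam = \Gam^{0}$ case, into the assertion that the construction is independent of the chosen partition.
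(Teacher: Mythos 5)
Your proposal is correct and matches the paper's intent exactly: the paper gives no separate proof of this corollary, treating it as an immediate consequence of the preceding proposition on distinct finite well-bounded partitions. Your only addition, handling the trivial case $\Gam=\Gam^{0}$ and noting transitivity through a fixed reference partition, is a harmless (and slightly more careful) packaging of the same argument.
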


\begin{dfn}
	We call the topology induced by finite well-bounded partitions of $\setQ$ the \textbf{weak topology} and denote it by $\topw$. In a later section we will justify this choice of name by showing that $\topw$ share the same convergence criterion as the weak topology on the Levi-Civita field. A detailed study of the weak topology on the Levi-Civita field $\mathcal{R}$ can be found in \cite{shamberz, rstopology09}.
\end{dfn}
	\section{Convergence of sequences}
In this section we will study convergence in $\topu$, $\topg$, and in particular in $\topw$. We start with the following definition.

\begin{dfn}
	Let $\tau$ be any topology on $\setF$ induced by a countable family of semi-norms (which we will denote by $\genval{\cdot}{\tau}{n}$) and let $\left(s_n\right)_{n\in\setN}$ be a sequence in $\setF$. We say that $\left(s_n\right)_{n\in\setN}$ converges in $\tau$ if there exists $s\in\setF$ such that for every $\epsilon>0$ in $\setR$ there exists $N\in\setN$ such that if $n\geq N$ then
$\genval{s_n-s}{\tau}{\mu(\epsilon)} < \epsilon$.
\end{dfn}

We would like to find necessary conditions for a sequence to converge as defined above but first we introduce the following useful notion of regularity.

\begin{dfn}[Regular sequence (Hahn field)]
	Let $\left(s_n\right)_{n\in\setN}$ be a sequence in $\setF$. If
	\[
		\bigcup\limits_{n=1}^{\infty}\supp\left(s_n\right)
	\]
	is well-ordered then we say that $\left(s_n\right)_{n\in\setN}$ is a \textbf{regular sequence}.
\end{dfn}

For comparison, regularity of a sequence in the Levi-Civita field is defined as follows:

\begin{dfn}[Regular sequence (Levi-Civita field)]
	Let $\left(x_n\right)_{n\in\setN}$ be a sequence in $\setRscript$. We say that $(x_n)$ is a \textbf{regular sequence} in the Levi-Civita field if
	\[
		\bigcup_{n=1}^{\infty}\supp\left(x_n\right)
	\]
	is left-finite.
\end{dfn}

We are now ready to state necessary conditions for a sequence in $\setF$ to converge in each of $\topw$ and $\topu$.

\begin{prp}
	Let $\left(s_n\right)_{n\in\setN}$ be a convergent sequence in $\setF$. Then for every $q\in\setQ$, the real sequence $\left(s_n[q]\right)_{n\in\setN}$ converges in $\setR$ in the standard topology. Conversely, if $(s_n)$ is regular and if for every $q\in\setQ$, $(s_n[q])$ converges in $\setR$ in the standard topology then $(s_n)$ converges in $\setF$.
\end{prp}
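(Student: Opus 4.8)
The plan is to handle the two directions separately, using the local base of pseudo-balls at $0$ from Proposition~\ref{clbut} together with the fact (established earlier) that each $\genval{\cdot}{\tau}{n}$ is a maximum over a \emph{finite} index set for both $\topw$ (with $\gam$ a finite well-bounded partition, so $\Gamn{n}$ is finite) and for $\topu$ restricted to the relevant support.

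For the forward direction, suppose $(s_n)\to s$ in the topology $\tau$ (either $\topw$ or $\topu$). Fix $q\in\setQ$ and let $\varepsilon>0$ in $\setR$ be given; we want to produce $N$ so that $|s_n[q]-s[q]|<\varepsilon$ for $n\geq N$. The key observation is that $|s_n[q]-s[q]|\leq \genval{s_n-s}{\tau}{m}$ whenever $q$ lies in the finite set $\Gamn{m}$ (for $\topw$) or whenever $q\leq m$ (for $\topu$): in each case the defining maximum/supremum ranges over a set that contains the single index $q$. So first choose $m\in\setN$ large enough that $q\in\Gamn{m}$ (respectively $q\leq m$); this is possible since $\Gam$ is a partition of $\setQ$ (respectively since $\setN$ is cofinal in $\setQ$ from above... actually since $q\le m$ for $m$ large). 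Then pick $\varepsilon'>0$ with $\varepsilon'<\varepsilon$ and $\mu(\varepsilon')\geq m$; by convergence there is $N$ with $\genval{s_n-s}{\tau}{\mu(\varepsilon')}<\varepsilon'$ for $n\geq N$, and then for such $n$, using monotonicity of the seminorms in the index, $|s_n[q]-s[q]|\leq \genval{s_n-s}{\tau}{\mu(\varepsilon')}<\varepsilon'<\varepsilon$. This gives coordinatewise convergence.

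For the converse, assume $(s_n)$ is regular and each $(s_n[q])_{n}$ converges in $\setR$, say to $a_q\in\setR$. First I would define the candidate limit $s$ by $s[q]:=a_q$ and check $s\in\setF$: its support is contained in $\bigcup_n \supp(s_n)$, which is well-ordered by regularity, hence well-ordered, so $s\in\setF$. Now fix $\varepsilon>0$ in $\setR$; I must find $N$ so that $\genval{s_n-s}{\tau}{\mu(\varepsilon)}<\varepsilon$ for $n\geq N$. The point is that $\genval{s_n-s}{\tau}{\mu(\varepsilon)}$ is the maximum of $|s_n[q]-s[q]|$ over $q$ in a set $E$ which is $\Gamn{\mu(\varepsilon)}$ in the $\topw$ case and $(-\infty,\mu(\varepsilon)]$ in the $\topu$ case. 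Since all the $s_n$ and $s$ have support inside the well-ordered (resp.\ left-finite, in $\topu$ we must be careful) set $W:=\bigcup_n\supp(s_n)\cup\supp(s)$, the relevant indices that can contribute are $q\in W\cap E$; and because $W$ is well-ordered and $E$ is well-bounded (resp.\ $W$ is left-finite and $E$ is bounded above), the set $W\cap E$ is \emph{finite}. Enumerate $W\cap E=\{q_1,\dots,q_k\}$; for each $j$ choose $N_j$ with $|s_n[q_j]-s[q_j]|<\varepsilon$ for $n\geq N_j$, and set $N=\max_j N_j$. Then for $n\geq N$ every term in the maximum defining $\genval{s_n-s}{\tau}{\mu(\varepsilon)}$ is $<\varepsilon$, so the whole maximum is $<\varepsilon$, as required. (If $W\cap E$ is empty the seminorm is $0<\varepsilon$ trivially.)

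The main obstacle is the finiteness of $W\cap E$ in the converse direction, and in particular making sure it works for $\topu$ and not just $\topw$: for $\topw$ we may take $\gam$ finite so $\Gamn{\mu(\varepsilon)}$ itself is finite and no regularity of the sequence is even needed to cut things down; but for $\topu$ the set $(-\infty,\mu(\varepsilon)]$ is infinite, so one genuinely needs $W\cap(-\infty,\mu(\varepsilon)]$ to be finite — this requires $W$ to be not merely well-ordered but left-finite, i.e.\ the appropriate notion of regularity in the $\topu$ case may be stronger, or alternatively the supremum may legitimately be approached but one must control a possible accumulation from below. I would flag this and either (a) argue that for $\topu$ one should use the left-finite notion of regularity, or (b) give a direct $\varepsilon/2$-argument handling the tail of $W\cap E$ near its (well-ordered) structure. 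The forward direction and the $\topw$ converse are routine once the monotonicity and finiteness observations are in place.
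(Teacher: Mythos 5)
Your proposal is correct and follows essentially the same route as the paper: the forward direction shrinks $\epsilon$ so that $\mu(\epsilon)$ is large enough for $q\in\Gamn{\mu(\epsilon)}$, and the converse uses finiteness of $\Gamn{\mu(\epsilon)}$ to take a maximum of the $N_q$ (you are in fact slightly more complete than the paper in explicitly constructing the limit $s$ and using regularity to verify $s\in\setF$, whereas the paper reduces to a zero limit). The concern you flag about $\topu$ is moot here: this proposition concerns $\topw$ only, and the paper handles convergence in $\topu$ in a separate proposition with a genuinely stronger, locally uniform condition, exactly as your analysis predicts.
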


\begin{proof}
First suppose that $(s_n)$ converges in $\left(\setF, \topw\right)$. We may assume without loss of generality that $s_n\rightarrow 0$ because if $s_n\rightarrow s\neq 0$ then we can define the new sequence $s'_n = s_n-s$ and for any $q\in\setQ$, $(s'_n[q])$ will converge to $0$ as a real sequence if and only if $(s_n[q])$ converges to $s[q]$ in $\setR$. Thus, for any $\epsilon>0$ in $\setR$, there exists $N\in\setN$ such that if $n\geq N$ then
$\genval{s_n}{\topw}{\mu(\epsilon)} < \epsilon$.

Now let $q_0\in\setQ$ be given. We will show that $s_n[q_0]\rightarrow 0$ in $\setR$. So let $\epsilon>0$ in $\setR$ be given and let $n_0\in\setN$ be large enough so that $q_0\in\Gamn{n_0}$ where $\Gam$ is any finite well-bounded partition of $\setQ$. Let $\epsilon_0 = \min\left\{\epsilon, \frac{1}{n_0}\right\}$. Since $s_n\rightarrow 0$ in $\left(\setF, \topw=\topg\right)$, there exists $N\in\setN$ such that if $n\geq N$ then $\gamval{s_n}{\mu(\epsilon_0)} < \epsilon_0$.
Recall that $\gamval{s_n}{\mu(\epsilon_0)} = \max\left\{\abs{s_n[q]}\middle|q\in\Gamn{\mu(\epsilon_0)}\right\}$
and $q_0 \in \Gamn{n_0}\subseteq\Gamn{\mu(\epsilon_0)}$. It follows that if $n\geq N$ then
$\abs{s_n[q_0]} < \epsilon_0 \leq \epsilon$.
Thus, we have shown that for any $\epsilon>0$ in $\setR$ there is a $N\in\setN$ such that if $n\geq N$ then
$\abs{s_n[q_0]}<\epsilon$, and hence
$s_n[q_0]\rightarrow 0$ in $\setR$. Since the argument holds for an arbitrary $q_0\in\setQ$, it follows that, for every $q\in\setQ$, $(s_n[q])$ converges in $\setR$.

Now suppose we have a regular sequence $\left(s_n\right)_{n\in\setN}$ in $\setF$ such that for every $q\in\setQ$, $s_n[q]\rightarrow 0$ in $\setR$. We will show that $s_n\rightarrow 0$ in $\left(\setF, \topw\right)$. So let $\epsilon>0$ in $\setR$ and let $\Gam$ be any finite well-bounded partition of $\setQ$ (recall that $\topg = \topw$). We know that, for every $q\in\Gamn{\mu(\epsilon)}\subset\setQ$, there exists $N_q\in\setN$ such that if $n\geq N_q$ then $\abs{s_n[q]}<\epsilon$. Let
\[
	N= \max\left\{N_q\middle|q\in\Gamn{\mu(\epsilon)}\right\}
\]
which must exist because $\Gam$ is a finite partition. We now observe that if $n\geq N$ then, for every $q\in\Gamn{\mu(\epsilon)}$, we have that $\abs{s_n[q]}<\epsilon$
which implies that
$\gamval{s_n}{\mu(\epsilon)} < \epsilon$.
This holds for any $\epsilon>0$ in $\setR$ and hence $s_n\rightarrow 0$ in $\left(\setF, \topg = \topw\right)$.
\end{proof}

The following proposition justifies our choice to refer to $\topu$ as the ``locally uniform support topology''. As we will see, for a sequence to converge in this topology it is not sufficient that the sequence of values at each support point converges as a real sequence but rather the convergence must be locally uniform. That is, how quickly the sequence of values at one support point converges tells us something about how quickly the sequence of values at other nearby support points converge.

\begin{prp}
 Let $\left(s_n\right)_{n\in\setN}$ be a sequence in $\setF$ that converges to $s$. Then for every $\epsilon>0$ in $\setR$ and for every $q\in\setQ$ there exist $N_q\in\setN$ and $\delta_q\in\setQ$ such that if $n\geq N_q$ and $\abs{q'-q}<\delta_q$ then $\abs{(s_n-s)[q']}<\epsilon$. Conversely, if $\left(s_n\right)_{n\in\setN}$ is a regular sequence in $\setF$ and if there exists $s\in\setF$ such that for every $\epsilon>0$ in $\setR$ and for every $q\in\setQ$ there exist $N_q\in\setN$ and $\delta_q\in\setQ$ such that if $n\geq N_q$ and $\abs{q'-q}<\delta_q$ then $\abs{(s_n-s)[q']}<\epsilon$, then $(s_n)$ converges to $s$ in $\setF$
\end{prp}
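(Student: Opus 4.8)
The plan is to normalise to the case $s=0$ and then treat the two implications separately. Replacing $(s_n)_{n\in\setN}$ by $(s_n-s)_{n\in\setN}$ transports the convergence of the sequence to convergence to $0$ and the stated local condition (with limit candidate $s$) to the same condition with limit candidate $0$, and it preserves regularity because $\bigcup_n\supp(s_n-s)\subseteq\bigl(\bigcup_n\supp(s_n)\bigr)\cup\supp(s)$ and a union of two well-ordered subsets of $\setQ$ is again well-ordered; so assume $s=0$. Recall that $s_n\to 0$ in $\topu$ means that for every $\epsilon>0$ in $\setR$ there is $N\in\setN$ with $\unval{s_n}{\mu(\epsilon)}=\sup\{\abs{s_n[q]}:q\le\mu(\epsilon)\}<\epsilon$ for all $n\ge N$. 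For the forward implication, fix $\epsilon>0$ and $q\in\setQ$, choose $m\in\setN$ with $m>q+1$, and put $\epsilon':=\min\{\epsilon,1/m\}$; since $\mu$ is non-increasing and $\mu(1/m)\ge m$, we have $\mu(\epsilon')\ge m$. Convergence in $\topu$ supplies $N_q\in\setN$ with $\abs{s_n[q']}<\epsilon'\le\epsilon$ for all $q'\le\mu(\epsilon')$ and all $n\ge N_q$, and then $\delta_q:=1$ works: if $\abs{q'-q}<\delta_q$ then $q'<q+1<m\le\mu(\epsilon')$, so $\abs{s_n[q']}<\epsilon$ for every $n\ge N_q$, which is the claimed conclusion.

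For the converse, assume $(s_n)$ regular and the local condition with $s=0$, fix $\epsilon>0$, and put $r:=\mu(\epsilon)\in\setN$; it suffices to find $N\in\setN$ with $\abs{s_n[q]}<\epsilon/2$ for all $n\ge N$ and all $q\in\supp(s_n)$ with $q\le r$, since then $\unval{s_n}{r}\le\epsilon/2<\epsilon$. Let $W:=\bigcup_n\supp(s_n)$, which is well-ordered by regularity; if $W\cap(-\infty,r]=\emptyset$ we are done, so suppose not and set $K:=\overline{W\cap(-\infty,r]}$, the closure taken in $\setR$, so that $K$ is closed and bounded, hence compact. For each $x\in K\cap\setQ$ the hypothesis, applied with $\epsilon/2$ in place of $\epsilon$ and centre $q=x$, yields $\delta_x\in\setQ^{+}$ and $N_x\in\setN$ with $\abs{s_n[q']}<\epsilon/2$ whenever $n\ge N_x$ and $\abs{q'-x}<\delta_x$; put $V_x:=(x-\delta_x,x+\delta_x)$. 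If one can moreover produce, for every $x\in K\setminus\setQ$, an open interval $V_x\ni x$ and an index $N_x\in\setN$ with $\abs{s_n[q']}<\epsilon/2$ for all $n\ge N_x$ and all $q'\in V_x\cap\setQ$, then $\{V_x\}_{x\in K}$ is an open cover of the compact set $K$; a finite subcover $V_{x_1},\dots,V_{x_\ell}$ finishes the job with $N:=\max_{i}N_{x_i}$, since any $q\in\supp(s_n)$ with $q\le r$ lies in $W\cap(-\infty,r]\subseteq K$, hence in some $V_{x_i}$, whence $\abs{s_n[q]}<\epsilon/2$ as soon as $n\ge N$. This gives $\unval{s_n}{r}<\epsilon$, hence $s_n\to 0$ in $\topu$.

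The main obstacle is precisely the bracketed step: producing the neighbourhoods $V_x$ and thresholds $N_x$ at the points of $K\setminus\setQ$, that is, at the accumulation points of $W=\bigcup_n\supp(s_n)$ that fail to be rational. This difficulty is genuinely a feature of the Hahn field and has no analogue in the Levi-Civita setting of \cite{rstopology09}, where $W\cap(-\infty,r]$ is left-finite, hence finite, so $K\subseteq\setQ$ and the covering above is immediate. In $\setF$ I would try to control how $\supp(s_n)$ clusters near such an $x$ by choosing rationals $q^{(k)}\uparrow x$, feeding each into the local hypothesis, and amalgamating the resulting bounds while exploiting the well-ordering of $W$; this is where the real content of the converse lies, and I would expect that pushing it through requires either that the well-ordering of $W$ rule out a ``travelling bump'' supported at a single rational $q_n$ with $q_n\uparrow x$ for $x$ irrational, or that the local hypothesis be imposed at all real $q$ rather than only at rational $q$.
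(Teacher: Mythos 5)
Your forward direction is correct and is essentially the paper's own argument: the paper takes $\epsilon_0=\min\{\epsilon,\frac{1}{|q|+1}\}$ and $\delta_q=1$, and your $\epsilon'=\min\{\epsilon,1/m\}$ with $m>q+1$ is the same device. The converse is where your proposal stops, and you have put your finger on exactly the right spot. The paper's own proof of the converse covers $[q_{\min},1/\epsilon]$ by the intervals $I_q=(q-\delta_q,q+\delta_q)$ with $q\in\setQ$ and extracts a finite subcover on the grounds that ``every closed bounded subset of $\setQ$ is compact.'' That claim is false, and the gap it conceals is precisely your bracketed step: the hypothesis gives no control over $s_n$ near an irrational accumulation point of $\bigcup_n\supp(s_n)$, and a well-ordered subset of $\setQ$ can accumulate (from the left) at an irrational.

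Moreover, your closing suspicion is correct: the ``travelling bump'' you describe is an actual counterexample to the converse as stated, so no completion of the argument is possible without strengthening the hypothesis. Take a strictly increasing sequence of rationals $q_n\uparrow\sqrt{2}$ and set $s_n:=d^{q_n}$, $s:=0$. Then $\bigcup_n\supp(s_n)=\{q_n:n\in\setN\}$ is well-ordered, so $(s_n)$ is regular. Given $\epsilon>0$ and $q\in\setQ$, choose $\delta_q\in\setQ^{+}$ with $\delta_q<\abs{q-\sqrt{2}}$; then $(q-\delta_q,q+\delta_q)$ misses a tail of $(q_n)$, so there is $N_q$ with $s_n[q']=0$ whenever $n\ge N_q$ and $\abs{q'-q}<\delta_q$, and the local condition holds at every rational $q$. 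Yet $\unval{s_n}{2}=\abs{s_n[q_n]}=1$ for all $n$, so $s_n\not\to 0$ in $\topu$. The statement (and the paper's proof) can be repaired by demanding the $(N_x,\delta_x)$ data at every \emph{real} $x$, or at least at every accumulation point of $\bigcup_n\supp(s_n)$; with that hypothesis your compactness argument over $K=\overline{W\cap(-\infty,r]}$ goes through verbatim. So: forward half complete and faithful to the paper; converse half honestly incomplete, but your diagnosis is sharper than the paper's own proof, which asserts its way past the same obstruction.
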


\begin{proof}
	First we suppose that $(s_n)_{n\in\setN}$ converges in $\left(\setF, \topu\right)$. As in previous proofs, we may assume without loss of generality that the sequence converges to 0. So let $qin\setQ$ and $\epsilon>0$ in $\setR$ be given. Let
	\[
	\epsilon_0 = \min\left\{\epsilon, \frac{1}{\abs{q}+1}\right\}.
	\]
	Since $(s_n)$ converges in $\left(\setF, \topu\right)$, there exists $N_0\in\setN$ such that for every $n\geq N_0$, we have that
	\[
		\unval{s_n}{\frac{1}{\epsilon_0}}<\epsilon_0.
	\]
By our choice of $\epsilon_0$, it follows that
	\[
		\left\{q'\in\setQ\middle|\abs{q'-q}<1\right\} \subseteq \left\{q'\in\setQ\middle|q'<\frac{1}{\epsilon_0}\right\}.
	\]
	Thus if $\abs{q'-q}<1$ then for all $n\geq N$,
	\[
		\abs{s_n[q']}<\epsilon_0\leq \epsilon.
	\]
	
	Now we suppose that for every $\epsilon>0$ in $\setR$ and for every $q\in\setQ$ there exist $N_q\in\setN$ and $\delta_q\in\setQ$ such that if $n\geq N$ and $\abs{q'-q}<\delta_q$ then $\abs{(s_n)[q']}<\epsilon$. We wish to show that  $(s_n)_{n\in\setN}$ converges to $0$ in $\left(\setF, \topu\right)$. Thus, for every $q\in\setQ$ let $I_q= \left(q-\delta_q, q+\delta_q\right)$. Since $(s_n)$ is a regular sequence
	\[
	q_{\min}:= \min\left\{\bigcup\limits_{i=0}^{\infty}\supp\left(s_n\right)\right\}
	\]
	exists. So for any $q\in\setQ\setminus\left[q_{\min}, \frac{1}{\epsilon}\right]$ either $s_n[q] = 0$ for all $n\in\setN$ (if $q<q_{\min}$) or $\unval{\cdot}{\frac{1}{\epsilon}}$ is not dependent on the value of $s_n[q]$ (if $q>\frac{1}{\epsilon}$). Clearly $\left\{I_q\middle|q\in\left[q_{\min}, \frac{1}{\epsilon}\right]\right\}$ is an open cover of $\left[q_{\min}, \frac{1}{\epsilon}\right]$ and since every closed bounded subset of $\setQ$ is compact there must be a finite collection of rational numbers
$\left\{q_j\right\}_{j\in \{1,\ldots,J\}} \in \left[q_{\min}, 1/\epsilon\right]$
	such that
	\[
	 \left[q_{\min}, \frac{1}{\epsilon}\right]\subset \bigcup\limits_{j=1}^{J}I_{q_j}.
	\]
	Let $N:= \max\left\{N_{q_j}\middle|j\in\left\{1, \ldots, J\right\}\right\}$.
	Then, for every $q\in\left[q_{\min}, \frac{1}{\epsilon}\right]$, if $n\geq N$ then $\val{s_n[q]}<\epsilon$. But
	\[
		\unval{s_n}{\frac{1}{\epsilon}} = \sup\left\{\abs{s_n[q]}\middle|q\in\left[q_{\min}, \frac{1}{\epsilon}\right]\right\}.
	\]
It follows that if $n\geq N$ then $\unval{s_n}{\frac{1}{\epsilon}}<\epsilon$.
This argument holds for any $\epsilon>0$ in $\setR$ and hence $(s_n)$ converges to $0$ in $\left(\setF, \topu\right)$.
\end{proof}

We conclude this section with the following proposition which offers some insight into the relation between weak topologies and convergence in the Hahn field and the analogous concepts on the Levi-Civita field.

\begin{prp}
	$\topw\vert_{\setRscript} \subsetneq \topu\vert_{\setRscript}$.
\end{prp}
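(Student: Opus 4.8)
The claim has two parts: (i) the weak topology on the Levi-Civita field $\setRscript$, realized as the restriction $\topw\vert_{\setRscript}$, is contained in the restriction of the locally uniform support topology $\topu\vert_{\setRscript}$; and (ii) the containment is strict. For (i), I would not argue directly from open sets but rather compare the two families of semi-norms on $\setRscript$. First I would observe that when $\topw$ is realized by a \emph{finite} well-bounded partition $\Gam$ of $\setQ$, the semi-norm $\gamval{\cdot}{n}$ restricted to $\setRscript$ satisfies $\gamval{x}{n} = \max\{\abs{x[q]} : q \in \Gamn{n}\}$ with $\Gamn{n}$ finite; meanwhile $\unval{x}{r}\vert_{\setRscript} = \max\{\abs{x[r']} : r' \in \supp(x) \cap (-\infty, r]\}$, which coincides with the Levi-Civita semi-norm $\wkval{x}{r}$ of Definition~\ref{lcsndef}. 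The key point is then essentially the argument of Proposition~\ref{prpref1}: given a $\topw$-pseudo-ball $\pbg{x}{r}$ (equivalently a basic weak-topology neighborhood), choose $q_0 = \max\Gamn{\mu(r)}$ and then $q_1 > \max\{q_0, 1/r\}$; since $\Gamn{\mu(r)} \subset (-\infty, q_1]$, controlling $\unval{y-x}{q_1} < 1/q_1 < r$ forces $\abs{(y-x)[q]} < r$ for every $q \in \Gamn{\mu(r)}$, hence $\gamval{y-x}{\mu(r)} < r$. So $\pbu{x}{1/q_1} \cap \setRscript \subset \pbg{x}{r} \cap \setRscript$, which gives $\topw\vert_{\setRscript} \subseteq \topu\vert_{\setRscript}$. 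Essentially this is Proposition~\ref{prpref1} read on the subspace, and since nothing in that proof used well-orderedness beyond finiteness of $\Gamn{\mu(r)}$, it transfers verbatim.

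\textbf{Strictness.} For part (ii) I would exhibit a $\topu\vert_{\setRscript}$-neighborhood of $0$ that is not a $\topw\vert_{\setRscript}$-neighborhood. The natural candidate is $\pbu{0}{1/q} \cap \setRscript$ for a fixed $q \in \setQ^{+}$, and I would mimic Proposition~\ref{prpref2}: for each $r \in \setQ^{+}$, by Lemma~\ref{lemmaref1} there is $q' \in (-\infty, q] \cap (\setQ \setminus \Gamn{\mu(r)})$, and since a single point is trivially left-finite, the element $x = \tfrac{2}{q} d^{q'}$ lies in $\setRscript$. Then $\gamval{x}{\mu(r)} = 0 < r$ so $x \in \pbg{0}{r}$, but $\abs{x[q']} = \tfrac{2}{q} > \tfrac{1}{q}$ so $x \notin \pbu{0}{1/q}$. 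Since $\{\pbg{0}{r} \cap \setRscript : r \in \setQ^{+}\}$ is a local base for $\topw\vert_{\setRscript}$ at $0$ (this is the restriction of the countable local base from Proposition~\ref{clbut}), no basic $\topw$-neighborhood of $0$ sits inside $\pbu{0}{1/q} \cap \setRscript$, so that set is not open in $\topw\vert_{\setRscript}$. Combining, $\topw\vert_{\setRscript} \subsetneq \topu\vert_{\setRscript}$.

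\textbf{The main obstacle.} The conceptual content is already carried by Propositions~\ref{prpref1} and~\ref{prpref2}, so the real work is purely bookkeeping about subspace topologies: one must check that restricting the semi-norm families to $\setRscript$ produces exactly the subspace topologies $\topw\vert_{\setRscript}$ and $\topu\vert_{\setRscript}$, i.e. that a set is open in $\topu\vert_{\setRscript}$ iff around each of its points there is a pseudo-ball of $\topu$ intersected with $\setRscript$ contained in it — this follows because pseudo-balls form a base for $\topu$ on $\setF$. The one genuine verification to flag is that the witness $x = \tfrac{2}{q}d^{q'}$ must actually belong to $\setRscript$, not merely $\setF$; this is immediate since $\supp(x) = \{q'\}$ is finite and a fortiori left-finite, and similarly for part (i) one should note that the finite-partition semi-norm $\gamval{\cdot}{n}$ makes perfectly good sense on the subspace $\setRscript$ with no convergence issues at all. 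No step should require any calculation beyond what Propositions~\ref{prpref1} and~\ref{prpref2} already contain.
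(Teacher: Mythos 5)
Your proposal is correct and follows essentially the same route as the paper: the containment is the pseudo-ball comparison of Proposition~\ref{prpref1} specialized to a finite partition (with only cosmetically different choices of radii), and strictness uses the same witness $c\,d^{q'}$ with $q'$ below the cutoff but outside $\Gamn{\mu(r)}$, noting it lies in $\setRscript$. Your extra check that the semi-norm restrictions induce exactly the subspace topologies is sensible bookkeeping that the paper leaves implicit.
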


\begin{proof}
	First we show that $\topw\vert_{\setRscript}\subset \topu\vert_{\setRscript}$. So let $O\in\topw\vert_{\setRscript}$ be given and let $\Gam$ be any finite well-bounded partition of $\setQ$. Let $x\in O$ be given. Then there exists $r\in\setQ$ such that
$\pbg{x}{r}\cap\setRscript\subset O$.
Since $\Gam$ is well-bounded $\Gamn{\mu(r)}$ must have a maximal element which we will denote by $q_{\max}$. Let
\begin{equation}\label{eqeps}
\epsilon=\left\{\begin{array}{ll}\min\left\{r, \frac{1}{\left|q_{\max}\right|}\right\}&\mbox{if }q_{\max}\ne 0\\
r&\mbox{if }q_{\max}=0. \end{array}\right.
\end{equation}
Then $\epsilon>0$ in $\setR$.
We claim that
$\pbu{x}{\epsilon}\cap\setRscript\subset\pbg{x}{r}\cap\setRscript$.

	To prove the claim, it is sufficient to show that
$\pbu{x}{\epsilon}\subset\pbg{x}{r}$.
	So let $y\in\pbu{x}{\epsilon}$. Then we have that $\unval{y-x}{\frac{1}{\epsilon}}<\epsilon$, that is,
	\[
		\sup\left\{\abs{(y-x)[q]}\middle|q\in\left(-\infty, \frac{1}{\epsilon}\right]\cap\setQ\right\}<\epsilon.
	\]
Using Equation (\ref{eqeps}), we have that $\frac{1}{\epsilon}\geq \left|q_{\max}\right|\geq q_{\max}$, so $\Gamn{\mu(r)}\subset (-\infty, \frac{1}{\epsilon}]\cap\setQ$ and hence
	\[
		\sup\left\{\abs{(y-x)[q]}\middle|q\in\Gamn{\mu(r)}\right\}<\epsilon \leq r.
	\]
	But the left hand side is, by definition, $\gamval{y-x}{\mu(r)}$. Thus,
$\gamval{y-x}{\mu(r)}<r$, and hence $y\in\pbg{x}{r}$. This is true for any $y\in\pbu{x}{\epsilon}$. Thus, $\pbu{x}{\epsilon}\subset\pbg{x}{r}$ and hence $\pbu{x}{\epsilon}\cap\setRscript\subset\pbg{x}{r}\cap\setRscript\subset O$, as claimed. It follows that $O\in\topu\vert_{\setRscript}$ and hence $\topw\vert_{\setRscript}\subset\topu\vert_{\setRscript}$.
	
	It remains to show that $\topu\vert_{\setRscript}\not\subset\topw\vert_{\setRscript}$. We already know that $\left\{\pbg{0}{q}\middle|q\in\setQ^{+}\right\}$ is a local base of $\topw$ at $0$ so it is enough to show that for every $q\in\setQ^{+}$ there exists $x\in\pbg{0}{q}\cap\setRscript$ such that $x\not\in\pbu{0}{1}$. So let $q\in\setQ^{+}$ be given. Since $\Gam$ is a finite partition of $\setQ$, we have that $(-\infty, 1]\cap(\setQ\setminus\Gamn{\mu(q)})\neq\emptyset$ and hence we can select $s\in(-\infty, 1]\cap(\setQ\setminus\Gamn{\mu(q)})$. Let $x\in\setRscript$ be given by
$x= 2d^{s}$. Then
	\[
		\gamval{x-0}{\mu(q)} = 0 <q
	\]
	and hence $x\in\pbg{0}{q}\cap\setRscript$; but
	\[
		\unval{x-0}{1} \geq 2 > 1
	\]
and hence $x\not\in\pbu{0}{1}\cap\setRscript$. Since our choice of $q\in\setQ^{+}$ was arbitrary, it follows that
$\pbu{0}{1}\vert_{\setRscript}\not\in\topw\vert_{\setRscript}$ and hence
 $\topu\vert_{\setRscript} \not\subset \topw\vert_{\setRscript}$.
\end{proof}

	\section{Convergence of power series}
In this final section of the paper we will show that, under the weak topology, power series on the Hahn field have the same convergence criterion as that for convergence of power series on the Levi-Civita field \cite{shamberz,rstopology09}. We begin by recalling what we mean by convergence of a power series.

\begin{dfn}
	Let $(a_n)_{n\in\setN}$ be a sequence in $\setF$, and let $x_0$ be a fixed point and $x$ a variable point in $\setF$. Then we say that the power series
	\[
		\sum\limits_{n=0}^{\infty}a_n(x-x_0)^n
	\]
	converges weakly in $\setF$ if the sequence of partial sums
	\[
		S_m(x) :=  \sum\limits_{n=0}^{m}a_n(x-x_0)^n
	\]
	converges in $\left(\setF, \topw\right)$.
\end{dfn}

The following proposition provides a criterion for convergence of power series in $\left(\setF, \topw\right)$.

\begin{thm}\label{scaledwpscc}%scaled weak power series convergence criterion
	Let $(a_n)_{n\in\setN}$ be a regular sequence in $\setF$, let $S=\bigcup\limits_{n=0}^{\infty}\supp(a_n)$, and assume that
	\[
		-\liminf\limits_{n\rightarrow\infty}\left(\frac{\lambda(a_n)}{n}\right) = \limsup\limits_{n\rightarrow\infty}\left(-\frac{\lambda(a_n)}{n}\right) = 0.
	\]
	Let
	\[
		r= \frac{1}{\sup\left\{\limsup\limits_{n\rightarrow\infty}\abs{a_n[q]}^{\frac{1}{n}}:q\in S\right\}}
	\]
	and let $x\in\setF$ be such that $\lambda(x)\geq 0$. Then the power series
	\[
		\sum\limits_{n=0}^{\infty}a_nx^n
	\]
	converges absolutely in $\left(\setF, \topw\right)$ if $\abs{x[0]}<r$ and diverges in $\left(\setF, \topw\right)$ if $\abs{x[0]}>r$.
\end{thm}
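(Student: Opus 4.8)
The plan is to reduce the convergence question in $\left(\setF,\topw\right)$, via the sequence-convergence criterion already established (a regular sequence converges iff it converges coordinatewise at every $q\in\setQ$), to a collection of classical real power series questions, one for each $q\in\setQ$. The first step is to observe that since $(a_n)$ is regular and $\lambda(x)\ge 0$, the sequence of partial sums $\left(S_m(x)\right)_{m\in\setN}$ is regular: the support of $a_nx^n$ is contained in $\supp(a_n)+n\cdot\supp(x)\subseteq S+[0,\infty)$, and well-orderedness is preserved under the relevant finite sums and the countable union (this uses that $S$ is well-ordered and $\supp(x)\subseteq[0,\infty)$; the hypothesis $\lambda(x)\ge 0$ is exactly what keeps the shifted supports from marching off to $-\infty$). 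Hence by the earlier proposition it suffices to analyze, for each fixed $q\in\setQ$, the real series $\sum_{n=0}^{\infty}(a_nx^n)[q]$.

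Next I would expand $(a_nx^n)[q]$ using the convolution definition of multiplication. Writing $x=x[0]+\tilde x$ where $\lambda(\tilde x)>0$, one gets $x^n=\sum_{k=0}^{n}\binom{n}{k}x[0]^{n-k}\tilde x^{k}$, and $(a_nx^n)[q]=\sum_{k}\binom{n}{k}x[0]^{n-k}(a_n\tilde x^{k})[q]$. Since $\lambda(\tilde x^{k})\ge k\lambda(\tilde x)\to\infty$ and $\lambda(a_n)$ grows subnegatively (the hypothesis $\limsup(-\lambda(a_n)/n)=0$ controls this), for any fixed $q$ only finitely many $(n,k)$ with $k\ge 1$ can contribute, and I would argue more carefully that the dominant contribution to $\limsup_n|(a_nx^n)[q]|^{1/n}$ comes from the $k=0$ term $a_n[q]x[0]^n$ together with terms $a_n[q']x[0]^{\,\cdot}$ for the finitely many $q'\le q$ in $S$ that can be "pushed up" to $q$. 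The key estimate is that $\limsup_n|(a_nx^n)[q]|^{1/n}\le |x[0]|\cdot\sup\{\limsup_n|a_n[q']|^{1/n}:q'\in S, q'\le q\}\le |x[0]|/r$, with a matching lower bound realized along the right subsequence; the condition $-\liminf(\lambda(a_n)/n)=0$ is what prevents the "order of magnitude" of $a_n$ from contributing an extra exponential factor to the radius. This gives that $\sum_n (a_nx^n)[q]$ converges (absolutely) in $\setR$ for every $q$ when $|x[0]|<r$, so by the sequence criterion $\sum a_nx^n$ converges in $\left(\setF,\topw\right)$; taking $q\in\Gamn{\mu(\epsilon)}$ and using that this is a finite set turns coordinatewise absolute convergence into absolute convergence of the series in the weak topology.

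For divergence when $|x[0]|>r$, I would pick $q_0\in S$ with $\limsup_n|a_n[q_0]|^{1/n}>1/|x[0]|$ (possible by definition of the supremum defining $r$), so the real series $\sum_n (a_nx^n)[q_0]$ has terms that do not form a null sequence — here I again use the convolution expansion to check that the $k\ge 1$ corrections cannot rescue it, since they are negligible in the $n$-th root scale. Then $\left(S_m(x)\right)$ fails to converge coordinatewise at $q_0$, hence fails to converge in $\left(\setF,\topw\right)$ by the "only if" direction of the sequence criterion.

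The main obstacle is the bookkeeping in the convolution step: controlling $\limsup_n|(a_nx^n)[q]|^{1/n}$ requires showing that the cross terms $a_n[q']\tilde x[\text{stuff}]$ landing at a fixed $q$ are, on the $n$-th root scale, no larger than the diagonal term, and that finitely many such $q'$ suffice. This is where both hypotheses on $\lambda(a_n)$ are genuinely used — the $\liminf$ condition to pin the radius to $|x[0]|$ rather than a scaled version, and the $\limsup$ condition (regularity plus subnegative growth of $\lambda(a_n)$) to guarantee that for fixed $q$ the relevant index set is finite so that $\limsup$'s distribute over the finite sum. I would isolate this as a lemma: for regular $(a_n)$ with the stated $\lambda$-conditions and $\lambda(x)\ge 0$, $\limsup_n|(a_nx^n)[q]|^{1/n}=|x[0]|\cdot\sup\{\limsup_n|a_n[q']|^{1/n}:q'\in S,\ q'\le q,\ q-q'\in(\supp(x)\cup\{0\})+\cdots\}$, or at least the two inequalities needed, and then the theorem follows by the coordinatewise criterion exactly as above.
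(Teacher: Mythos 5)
Your overall strategy for the convergence half --- reduce to coordinatewise real convergence via the regular-sequence criterion, write $x=x[0]+h$ with $\lambda(h)>0$, and observe that at a fixed coordinate $q$ only boundedly many powers of $h$ and finitely many pairs from the convolution can contribute, so that $\limsup_n\abs{(a_nx^n)[q]}^{1/n}\le\abs{x[0]}/r<1$ --- is essentially the paper's argument (the paper organizes the estimate as an explicit Cauchy bound with an $n^{N'}$ factor rather than a root-scale bound, but the content is the same), and your route to absolute convergence is if anything slightly cleaner than the paper's auxiliary series $\sum_{q\in S}d^q\sum_n\abs{a_n[q]x^n}$. One small slip there: for fixed $q$ it is not that only finitely many pairs $(n,k)$ with $k\ge1$ contribute, but that only finitely many $k$ do (all $n$ still contribute for each such $k$); this does not damage the estimate.

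The divergence half, however, has a genuine gap. You pick an arbitrary $q_0\in S$ with $\limsup_n\abs{a_n[q_0]}^{1/n}>1/\abs{x[0]}$ and assert that the $k\ge1$ corrections to $(a_nx^n)[q_0]$ are ``negligible in the $n$-th root scale.'' That is false in general: the cross terms landing at $q_0$ involve the coefficients $a_n[q']$ for $q'<q_0$ in $S$, and nothing prevents $\limsup_n\abs{a_n[q']}^{1/n}$ from being as large as, or larger than, $\limsup_n\abs{a_n[q_0]}^{1/n}$ --- the supremum defining $1/r$ may be approached or exceeded at many points of $S$. In that case the cross terms are of the same exponential size as the diagonal term $a_n[q_0]x[0]^n$ and could in principle cancel it termwise, so neither your ``terms do not form a null sequence'' claim nor the ``matching lower bound along the right subsequence'' in your proposed lemma can be justified as stated. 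The paper's fix is to exploit the well-ordering of $S$: take $q_0$ to be the \emph{smallest} element of $S$ at which the real series $\sum_n a_n[q]x[0]^n$ diverges. Then every cross-term contribution at coordinate $q_0$ comes from coordinates $q'<q_0$, where by minimality the corresponding real series (and hence the series $\sum_n\frac{a_n[q']\,n!}{(n-k)!}x[0]^n$, which have the same root-test behaviour) all converge; so the value at $q_0$ splits as a divergent series plus a convergent one and must diverge, contradicting weak convergence. Your argument needs this minimality step --- an argument at the level of individual terms and $n$-th roots cannot replace it.
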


\begin{proof}
	If $\lambda(x)>0$ then $\sum\limits_{n=0}^{\infty}\left|a_nx^n\right|$ converges in $\left(\setF, \topv\right)$ and hence in $\left(\setF, \topw\right)$ \cite[Theorem 2.13]{graftonthesis}; so it remains to consider the case where $\lambda(x)=0$. First assume that $\abs{x[0]}<r$; we will show that, for every $q\in S$, the real power series
	\begin{equation}\label{series1}
		\sum\limits_{n=0}^{\infty}\left|a_n[q](x[0])^n\right|
	\end{equation}
	converges in $\setR$. So let $q_0\in S$ be given. Since $\abs{x[0]}<r$ we have that
	\[
		\abs{x[0]}<\frac{1}{\sup\left\{\limsup\limits_{n\rightarrow\infty}\abs{a_n[q]}^{\frac{1}{n}}:q\in S\right\}}
	\le\frac{1}{\limsup\limits_{n\rightarrow\infty}\abs{a_n[q_0]}^{\frac{1}{n}}}.
	\]
It follows that
$\limsup\limits_{n\rightarrow\infty}\abs{a_n[q_0](x[0])^n}^{\frac{1}{n}}<1$.
Let $c\in\setR$ be such that
	\[
		\limsup\limits_{n\rightarrow\infty}\abs{a_n[q_0](x[0])^n}^{\frac{1}{n}}< c < 1.
	\]
Then
$\sum\limits_{n=0}^{\infty}c^n$ converges to $1/(1-c)$ in $\setR$.
Since $\limsup\limits_{n\rightarrow\infty}\abs{a_n[q_0](x[0])^n}^{\frac{1}{n}}< c$, there exists $N\in\setN$ such that
	\[
		\abs{a_n[q_0](x[0])^n}^{\frac{1}{n}} < c
	\]
	 for all $n\geq N$. Thus, using the comparison test, it follows that
$\sum\limits_{n=0}^{\infty}\left|a_n[q_0](x[0])^n\right|$ converges in $\setR$. Since our choice of $q_0\in S$ was arbitrary, we conclude that the power series in Equation (\ref{series1}) converges in $\setR$ for every $q\in S$.

Next we claim that for all $q\in S$,
	\[
		\sum\limits_{n=0}^{\infty}\left|a_n[q]x^n\right|
	\]
	converges in $\left(\setF,\topw\right)$. So let $q_0\in S$ be given and consider the sequence of partial sums $\left(S_m\right)_{m\in\setN}$, where for each $m\in\setN$,
	$S_m = \sum\limits_{n=0}^{m} \left|a_n[q_0]x^n\right|$.
We know already that $(S_m)$ is a regular sequence because $\lambda(x)=0$ \cite[Theorem 2.3, Corollary 2.12.1]{graftonthesis}, so it remains to show that for any $t\in\setQ$, the real sequence $(S_m[t])$ converges in $\setR$. So let $t\in\setQ$ and $\epsilon>0$ in $\setR$ be given. We will show that there exists $N\in\setN$ such that if $m_2>m_1>N$ then
\[
\left|S_{m_2}[t]-S_{m_1}[t]\right|=\sum\limits_{n=m_1}^{m_2}\abs{a_n[q_0]x^n[t]}<\epsilon,
\]
thus showing that $(S_m[t])$ is a cauchy sequence and hence  convergent in $\setR$.

Let $h= x-x[0]$ and let $N'\in\setN$ be such that $N'\lambda(h)>t$. We have, for any $n\in\setN$, that
	\begin{align*}
		\left(\left(x[0] + h\right)^n\right)[t] &= \left(\sum\limits_{l=0}^{n}\frac{n!}{(n-l)!l!}h^l(x[0])^{n-l}\right)[t]\\
		&= \sum\limits_{l=0}^{\min\{N',n\}}\frac{n!}{(n-l)!l!}h^l[t](x[0])^{n-l}.
	\end{align*}
It follows that, for $m_2>m_1>N'$, we have that
	\begin{align*}
		\sum\limits_{n=m_1}^{m_2}\abs{a_n[q_0]x^n[t]}&=\sum\limits_{n=m_1}^{m_2}\abs{a_n[q_0](x[0]+h)^n[t]} \\
&= \sum\limits_{n=m_1}^{m_2}\abs{a_n[q_0]}\abs{ \sum\limits_{l=0}^{N'}\frac{n!}{(n-l)!l!}h^l[t]x[0]^{n-l}}\\
		&\leq \sum\limits_{n=m_1}^{m_2}\sum\limits_{l=0}^{N'}\abs{a_n[q_0]}\abs{h[t]^l}\abs{x[0]}^{n-l}\frac{n!}{(n-l)!l!}\\
		&\leq \left(\sum\limits_{l=0}^{N'}\frac{\abs{h[t]^l}\abs{x[0]}^{N'-l}}{l!}\right)\left( \sum\limits_{n=m_1}^{m_2}\abs{a_n[q_0]}n^{N'}\abs{x[0]}^{n-N'}\right).
	\end{align*}
	The first term in the final expression above is independent of $m_1$ and $m_2$; moreover, since
	\[
		\abs{x[0]} < r
		\leq \frac{1}{\limsup\limits_{n\rightarrow\infty}\left\{\abs{a_n[q_0]}^{\frac{1}{n}}\right\}}
		=  \frac{1}{\limsup\limits_{n\rightarrow\infty}\left\{\abs{a_n[q_0]n^{N'}}^{\frac{1}{n}}\right\}},
	\]
 the real series
	\[
		 \sum\limits_{n=0}^{\infty}\abs{a_n[q_0]}n^{N'}\abs{x[0]}^{n-N'}
	\]
	must converge in $\setR$. Thus, there exists $N''\in\setN$ such that if $m_2>m_1>N''$ then
	\[
	\sum\limits_{n=m_1}^{m_2}\abs{a_n[q_0]}n^{N'}\abs{x[0]}^{n-N'}
		 <\frac{\epsilon}{\sum\limits_{l=0}^{N'}\frac{\abs{h[t]^l}\abs{x[0]}^{N'-l}}{l!}}.
	\]
	It follows that if $m_2>m_1>\max\left\{N', N''\right\}$ then
	\begin{align*}
		\sum\limits_{n=m_1}^{m_2}\abs{a_n[q_0](x[0]+h)^n[t]} &\leq \left(\sum\limits_{l=0}^{N'}\frac{\abs{h[t]^l}\abs{x[0]}^{N'-l}}{l!}\right)\left( \sum\limits_{n=m_1}^{m_2}\abs{a_n[q_0]}n^{N'}\abs{x[0]}^{n-N'}\right)\\
			&< \epsilon.
	\end{align*}
 Since $q_0\in\setQ$ was arbitrary, it follows that $\sum\limits_{n=0}^{\infty}\left|a_n[q]x^n\right|$ converges in $\left(\setF,\topw\right)$, for every $q\in\setQ$.

 Finally we show that $\sum\limits_{n=0}^{\infty}\left|a_nx^n\right|$
converges in $\left(\setF,\topw\right)$. We have already shown that, for every $q\in\setQ$, $\sum\limits_{n=0}^{\infty}\left|a_n[q]x^n\right|$ converges in $\left(\setF,\topw\right)$. Moreover, $\lambda(\sum\limits_{n=0}^{\infty}a_n[q]x^n)\geq 0$ and hence
	\[
		\sum\limits_{q\in S}d^q\sum\limits_{n=0}^{\infty}\left|a_n[q]x^n\right|
	\]
has a well ordered support. Let $t\in\setQ$ be given. Then
	\begin{align*}
		\left(\sum\limits_{q\in S}d^q\sum\limits_{n=0}^{\infty}\left|a_n[q]x^n\right|\right)[t] &= \sum\limits_{q\in S}\left(d^q\sum\limits_{n=0}^{\infty}\left|a_n[q]x^n\right|\right)[t]\\
		&=  \sum\limits_{q\in S}\left(\sum\limits_{t_1+t_2 = t}d^q[t_1]\left(\sum\limits_{n=0}^{\infty}\left|a_n[q]x^n\right|\right)[t_2]\right)\\
		&=  \sum\limits_{q\in S}d^q[q]\left(\sum\limits_{n=0}^{\infty}\left|a_n[q]x^n\right|\right)[t-q]\\
		&=  \sum\limits_{q\in S}\sum\limits_{n=0}^{\infty}\left|a_n[q]\right|\left|x^n\right|[t-q].
	\end{align*}
	Now, let $S_0\subset S$ be the set of all $q\in S$ such that $t-q\in\bigcup\limits_{n=0}^{\infty}\supp(x^n)$. Since $S$ and $\bigcup\limits_{n=0}^{\infty}\supp(x^n)$ are both well-ordered, it follows that $S_0$ is finite \cite[Theorem 1.3]{graftonthesis}. It follows that
\begin{align*}
		\left(\sum\limits_{q\in S}d^q\sum\limits_{n=0}^{\infty}\left|a_n[q]x^n\right|\right)[t]&=\sum\limits_{q\in S}\sum\limits_{n=0}^{\infty}\left|a_n[q]\right|\left|x^n\right|[t-q]\\
= &\sum\limits_{q\in S_0}\left(\sum\limits_{n=0}^{\infty}\left|a_n[q]x^n\right|\right)[t-q]
\end{align*}
is finite. Thus, $\sum\limits_{q\in S}d^q\sum\limits_{n=0}^{\infty}\left|a_n[q]x^n\right|$ converges in $(\setF,\topw)$. Moreover, we have that

	\begin{align*}
		&\left(\sum\limits_{q\in S}d^q\sum\limits_{n=0}^{\infty}\left|a_n[q]x^n\right|\right)[t]=\sum\limits_{q\in S_0}d^q[q]\sum\limits_{n=0}^{\infty}\left|a_n[q]\right|\left|x^n\right|[t-q]\\
		= &\sum\limits_{n=0}^{\infty}\sum\limits_{q\in S_0}d^q[q]\left|a_n[q]\right|\left|x^n\right|[t-q]
		=\sum\limits_{n=0}^{\infty}\sum\limits_{q\in S}d^q[q]\left|a_n[q]\right|\left|x^n\right|[t-q]\\
		=&\sum\limits_{n=0}^{\infty}\sum\limits_{q\in S}\left|a_n[q]\right|\sum\limits_{t_1+t_2=t}d^q[t_1]\left|x^n\right|[t_2]
		=\left(\sum\limits_{n=0}^{\infty}\sum\limits_{q\in S}|a_n[q]|d^q|x^n|\right)[t]\\
		=&\left(\sum\limits_{n=0}^{\infty}\left(\sum\limits_{q\in S}|a_n[q]|d^q\right)|x^n|\right)[t]
		\ge\left(\sum\limits_{n=0}^{\infty}\left|\sum\limits_{q\in S}a_n[q]d^q\right||x^n|\right)[t]\\
=&\left(\sum\limits_{n=0}^{\infty}|a_nx^n|\right)[t].
	\end{align*}
	So for every $t\in\setQ$, $\left((\sum\limits_{n=0}^{m}\left|a_nx^n\right|)[t]\right)_{m\in\setN}$ converges as a real sequence and, moreover, $\left(\sum\limits_{n=0}^{m}|a_nx^n|\right)_{m\in\setN}$ is a regular sequence because $(a_n)_{n\in\setN}$ is regular and $\lambda(x)\geq 0$. Hence
$\sum\limits_{n=0}^{m}\left|a_nx^n\right|$ converges in $(\setF,\topw)$.

Now let $x\in\setF$ be such $|x[0]|>r$; we will show that $\sum\limits_{n=0}^{\infty}a_nx^n$ diverges in $\left(\setF,\topw\right)$. Assume to the contrary that $\sum\limits_{n=0}^{\infty}a_nx^n$ converges in $\left(\setF,\topw\right)$. Let $h=x-x[0]$. Then, since  $\sum\limits_{n=0}^{\infty}a_nx^n$ converges in $\left(\setF,\topw\right)$, we have that
	\begin{align*}
		\sum\limits_{n=0}^{\infty}a_nx^n &= \sum\limits_{n=0}^{\infty}a_n(x[0]+h)^n
		=\sum\limits_{n=0}^{\infty}a_n\left(\sum\limits_{k=0}^{n}\frac{n!}{k!(n-k)!}x[0]^{n-k}h^k\right)\\
		&=\sum\limits_{k=0}^{\infty}\sum\limits_{n=k}^{\infty}a_n\frac{n!}{k!(n-k)!}x[0]^{n-k}h^k
		= \sum\limits_{k=0}^{\infty}\frac{\left(\sum\limits_{n=k}^{\infty}\frac{a_nn!}{(n-k)!}x[0]^n\right)}{k!}h^k\\
		&= \sum\limits_{n=0}^{\infty}a_nx[0]^n + \sum\limits_{k=1}^{\infty}\frac{\left(\sum\limits_{n=k}^{\infty}\frac{a_nn!}{(n-k)!}x[0]^n\right)}{k!}h^k.
	\end{align*}
	Observe that, for every $k\in\setN$,
	\[
		\limsup\limits_{n\rightarrow\infty}\left\{\abs{\frac{n!}{(n-k)!}}^{\frac{1}{n}}\right\} = 1.
	\]
	Thus, for every $k\in\setN$ and $q\in\setQ$, we have that
	\[
		\limsup\limits_{n\rightarrow\infty}\left\{\abs{a_n[q]\frac{n!}{(n-k)!}}^{\frac{1}{n}}\right\} = \limsup\limits_{n\rightarrow\infty}\left\{\abs{a_n[q]}^{\frac{1}{n}}\right\}.
	\]
	It follows that, for every $q\in\setQ$,
	\[
		\sum\limits_{n=k}^{\infty}\frac{a_n[q]n!}{(n-k)!}x[0]^n
	\]
	diverges in $\setR$ only when
	\[
		\sum\limits_{n=k}^{\infty}a_n[q]x[0]^n
	\]
	diverges in $\setR$. Since $\abs{x[0]}>r$, we have by definition of $r$ that
	\[
		\frac{1}{\abs{x[0]}} < \sup\left\{\limsup\limits_{n\rightarrow\infty}\abs{a_n[q]}^{\frac{1}{n}}:q\in S\right\}.
	\]
	Therefore there is at least one $q\in S$ such that
	\[
		\frac{1}{\abs{x[0]}} < \limsup\limits_{n\rightarrow\infty}\abs{a_n[q]}^{\frac{1}{n}}
	\]
	and hence
	\[
		\abs{x[0]} > \frac{1}{\limsup\limits_{n\rightarrow \infty}\abs{a_n[q]}^{\frac{1}{n}}}.
	\]
	Thus, by the root test, we have that
	$\sum\limits_{n=0}^{\infty}a_n[q]x^n[0]$
	diverges in $\setR$. Let $q_0\in S$ be the smallest such element (which exists since $S$ is well-ordered)and let $q_1 = \lambda(h)$. Then, for any $k\geq 1$, the smallest $q\in\setQ$ such that
	\[
		\left(\frac{\left(\sum\limits_{n=k}^{\infty}\frac{a_nn!}{(n-k)!}x[0]^n\right)}{k!}h^k\right)[q]
	\]
	diverges is $q = q_0 + kq_1$. Since $kq_1>0$ we therefore have that
	\[
		\left(\sum\limits_{k=1}^{\infty}\frac{\left(\sum\limits_{n=k}^{\infty}\frac{a_nn!}{(n-k)!}x[0]^n\right)}{k!}h^k\right)[q_0]
	\]
	must converge. However,
$\sum\limits_{n=0}^{\infty}a_n[q_0]x^n[0]$
	diverges in $\setR$ and
	\[
		\left(\sum\limits_{n=0}^{\infty}a_nx^n\right)[q_0] = \sum\limits_{n=0}^{\infty}a_n[q_0]x^n[0] + \left(\sum\limits_{k=1}^{\infty}\frac{\left(\sum\limits_{n=k}^{\infty}\frac{a_nn!}{(n-k)!}x[0]^n\right)}{k!}h^k\right)[q_0].
	\]
It follows that $\left(\sum\limits_{n=0}^{\infty}a_nx^n\right)[q_0]$ diverges in $\setR$. This contradicts the assumption that $\sum\limits_{n=0}^{\infty}a_nx^n$ converges in $\left(\setF,\topw\right)$. Hence $\sum\limits_{n=0}^{\infty}a_nx^n$ diverges in $\left(\setF,\topw\right)$.
\end{proof}

\begin{crl}
	Let $(a_n)_{n\in\setN}$ be a sequence in $\setF$ and assume that
	\[
	-\liminf\limits_{n\rightarrow\infty}\left(\frac{\lambda(a_n)}{n}\right) = \limsup\limits_{n\rightarrow\infty}\left(-\frac{\lambda(a_n)}{n}\right) = \lambda_0.
	\]
	 Let
\[
		r= \frac{1}{\sup\left\{\limsup\limits_{n\rightarrow\infty}\abs{a_n[q]}^{\frac{1}{n}}:q\in\bigcup\limits_{n\in\setN}\supp(a_n)\right\}},
	\]
let $x_0\in\setF$ be fixed, and let $x\in\setF$ be such that $\lambda(x-x_0)\geq \lambda_0$. Finally, assume that $(a_nd^{n\lambda_0})_{n\in\setN}$ is a regular sequence. Then $\sum\limits_{n=0}^{\infty}a_n(x-x_0)^n$
	converges absolutely in $\left(\setF, \topw\right)$ if $\abs{(x-x_0)[\lambda_0]}<r$ and diverges in $\left(\setF, \topw\right)$ if $\abs{(x-x_0)[\lambda_0]}>r$.
	\end{crl}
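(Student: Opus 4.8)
The plan is to reduce the statement to Theorem~\ref{scaledwpscc} by an affine change of variables followed by a rescaling by a power of $d$ (note that $\lambda_0$ must be rational here, since otherwise $a_nd^{n\lambda_0}$ is not defined). Set
\[
	b_n := a_nd^{n\lambda_0}\ (n\in\setN)\qquad\text{and}\qquad y := (x-x_0)d^{-\lambda_0}.
\]
Since $(x-x_0)^n = y^nd^{n\lambda_0}$ we have $a_n(x-x_0)^n = b_ny^n$ for every $n$, so the two series $\sum a_n(x-x_0)^n$ and $\sum b_ny^n$ have the same partial sums; moreover $\abs{a_n(x-x_0)^n} = \abs{a_n}\abs{x-x_0}^n = \abs{b_n}\abs{y}^n = \abs{b_ny^n}$ (using $\abs{d^{n\lambda_0}}=d^{n\lambda_0}>0$), so absolute convergence transfers as well. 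Finally $\lambda(y)=\lambda(x-x_0)-\lambda_0\ge 0$, and since $d^{\lambda_0}[\lambda_0]=1$ we get $(x-x_0)[\lambda_0]=(yd^{\lambda_0})[\lambda_0]=y[0]$.

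Next I would verify that $(b_n)_{n\in\setN}$ and $y$ meet the hypotheses of Theorem~\ref{scaledwpscc}: regularity of $(b_n)$ is assumed; $\lambda(y)\ge 0$ was just recorded; and, because $\supp(b_n)=\supp(a_n)+n\lambda_0$ gives $\lambda(b_n)=\lambda(a_n)+n\lambda_0$ whenever $a_n\ne 0$, we have $\lambda(b_n)/n=\lambda(a_n)/n+\lambda_0$, whence
\[
	-\liminf_{n\rightarrow\infty}\frac{\lambda(b_n)}{n}=\limsup_{n\rightarrow\infty}\left(-\frac{\lambda(b_n)}{n}\right)=\lambda_0-\lambda_0=0,
\]
which is exactly the remaining hypothesis. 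Applying Theorem~\ref{scaledwpscc} to $(b_n)$ and $y$ then produces a radius
\[
	r'=\frac{1}{\sup\left\{\limsup_{n\rightarrow\infty}\abs{b_n[q]}^{1/n}:q\in\bigcup_{n\in\setN}\supp(b_n)\right\}}
\]
such that $\sum b_ny^n$ converges absolutely in $(\setF,\topw)$ when $\abs{y[0]}<r'$ and diverges when $\abs{y[0]}>r'$. Translating back via $y[0]=(x-x_0)[\lambda_0]$ and $b_ny^n=a_n(x-x_0)^n$, the corollary will follow once we establish that $r'=r$.

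The identification $r'=r$ is the main obstacle. Since $b_n[q+n\lambda_0]=a_n[q]$ and $\supp(b_n)=\supp(a_n)+n\lambda_0$, proving $r'=r$ amounts to showing that $\sup_q\limsup_n\abs{a_n[q]}^{1/n}$ is unchanged when one passes from $a_n$ and $\bigcup_n\supp(a_n)$ to the shifted data $a_nd^{n\lambda_0}$ and $\bigcup_n\supp(a_nd^{n\lambda_0})$; this is precisely where the form of the radius has to be handled with care, and it is the one step where the regularity hypothesis on $(a_nd^{n\lambda_0})$ is genuinely needed, the well-orderedness of $\bigcup_n\supp(b_n)$ being what lets one control, uniformly in $n$, which support points the shift can carry coefficients onto and how large those coefficients are. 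I expect the bulk of the work (and the only delicate point) to be this bookkeeping; everything else is the routine change of variables above. Accordingly I would organize the write-up as: (i) introduce $b_n$ and $y$ and record $a_n(x-x_0)^n=b_ny^n$, $\abs{a_n(x-x_0)^n}=\abs{b_ny^n}$, $\lambda(y)\ge 0$, and $y[0]=(x-x_0)[\lambda_0]$; (ii) check the hypotheses of Theorem~\ref{scaledwpscc} for $(b_n)$ and $y$; (iii) establish $r'=r$; (iv) invoke Theorem~\ref{scaledwpscc} and translate back.
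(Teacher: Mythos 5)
Your outline reproduces the paper's proof: the paper likewise sets $b_n=a_nd^{n\lambda_0}$ and $y=d^{-\lambda_0}(x-x_0)$, checks that $\limsup_{n\rightarrow\infty}\left(-\lambda(b_n)/n\right)=0$ and $\lambda(y)\geq 0$, notes $b_ny^n=a_n(x-x_0)^n$ and $y[0]=(x-x_0)[\lambda_0]$, and then invokes Theorem \ref{scaledwpscc}; so your steps (i), (ii) and (iv) match the paper and present no difficulty. The genuine gap is step (iii), which you correctly single out as the crux but leave unproved. The paper settles it in one displayed chain of equalities by substituting $t=q+n\lambda_0$ inside the $\limsup$; since that substitution depends on $n$, for a fixed $t$ the sequence $b_n[t]=a_n[t-n\lambda_0]$ is a \emph{diagonal} of the array $\left(a_n[q]\right)$, and its $\limsup$ need not coincide with any of the quantities $\limsup_{n\rightarrow\infty}\abs{a_n[q]}^{1/n}$ taken at fixed $q$. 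In fact the identity $r'=r$ is false in general when $\lambda_0\neq 0$, so the step you deferred cannot be supplied.

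A concrete counterexample: let $a_n=1+2^nd^{-n}$. Then $\lambda(a_n)=-n$, so the hypothesis holds with $\lambda_0=1$; moreover $b_n=a_nd^{n}=2^{n}+d^{n}$, so $\bigcup_{n}\supp(b_n)=\setN\cup\{0\}$ is well-ordered and $(b_n)$ is regular. Every negative $q\in\bigcup_{n}\supp(a_n)$ lies in the support of exactly one $a_n$, so $\limsup_{n\rightarrow\infty}\abs{a_n[q]}^{1/n}=0$ there, while $\limsup_{n\rightarrow\infty}\abs{a_n[0]}^{1/n}=1$; hence $r=1$ as defined in the corollary. However $r'=\frac{1}{2}$ (coming from $t=0$, where $b_n[0]=2^{n}$), and for $x_0=0$ and $x=\frac{3}{4}d$ we have $\abs{(x-x_0)[\lambda_0]}=\frac{3}{4}<r$, yet the $[0]$-coefficient of the $m$-th partial sum of $\sum a_nx^{n}$ equals $1+\sum_{n=0}^{m}\left(3/2\right)^{n}$, which diverges in $\setR$; so the series diverges in $\left(\setF,\topw\right)$ although the corollary predicts absolute convergence. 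The statement (and with it your step (iii)) can only be rescued by defining the radius from the shifted coefficients, i.e. $r=1/\sup\left\{\limsup_{n\rightarrow\infty}\abs{\left(a_nd^{n\lambda_0}\right)[t]}^{1/n}:t\in\bigcup_{n}\supp\left(a_nd^{n\lambda_0}\right)\right\}$, after which step (iii) disappears and the rest of your argument goes through verbatim; alternatively one must assume $\lambda_0=0$, in which case the corollary reduces to the theorem.
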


\begin{proof}
	For every $n\in\setN$ let $b_n = a_nd^{n\lambda_0}$; and let $y = d^{-\lambda_0}(x-x_0)$. Then
	\begin{align*}
		\limsup\limits_{n\rightarrow\infty}\left(-\frac{\lambda(b_n)}{n}\right) &= \limsup\limits_{n\rightarrow\infty}\left(-\frac{\lambda(a_nd^{n\lambda_0})}{n}\right)
		= \limsup\limits_{n\rightarrow\infty}\left(-\frac{\lambda(a_n)}{n} - \frac{n\lambda_0}{n}\right)\\
		&= \limsup\limits_{n\rightarrow\infty}\left(-\frac{\lambda(a_n)}{n}\right)  - \lambda_0
		=\lambda_0-\lambda_0 = 0.
	\end{align*}
	Moreover, for every $n\in\setN$, we have that
	\[
		b_ny^n = a_nd^{n\lambda_0}\left(d^{-\lambda_0}(x-x_0)\right)^n= a_n(x-x_0)^n,
	\]
and hence $\sum\limits_{n=0}^{\infty}a_n(x-x_0)^n  = \sum\limits_{n=0}^{\infty}b_ny^n$.
Finally note that
	\begin{align*}
		\frac{1}{r} &= \sup\left\{\limsup\limits_{n\rightarrow\infty}\abs{a_n[q]}^{\frac{1}{n}}:q\in\bigcup\limits_{n\in\setN}\supp(a_n)\right\}\\
		&=  \sup\left\{\limsup\limits_{n\rightarrow\infty}\abs{(a_nd^{n\lambda_0})[q+n\lambda_0]}^{\frac{1}{n}}:q\in\bigcup\limits_{n\in\setN}\supp(a_n)\right\}\\
		&=  \sup\left\{\limsup\limits_{n\rightarrow\infty}\abs{b_n[t]}^{\frac{1}{n}}:t\in\bigcup\limits_{n\in\setN}\supp(b_n)\right\}.
	\end{align*}

Since $(b_n)_{n\in\setN}$ is a regular sequence in $\setF$ with
	\[
		-\liminf\limits_{n\rightarrow\infty}\left(\frac{\lambda(b_n)}{n}\right) = \limsup\limits_{n\rightarrow\infty}\left(-\frac{\lambda(b_n)}{n}\right) = 0
	\]
and since $\lambda(y)=-\lambda_0+\lambda(x-x_0)\geq 0$, it follows immediately from Theorem \ref{scaledwpscc} that
$\sum\limits_{n=0}^{\infty}a_n(x-x_0)^n=\sum\limits_{n=0}^{\infty}b_ny^n$
converges absolutely in  $\left(\setF, \topw\right)$ if $\abs{y[0]}<r$ and diverges in  $\left(\setF, \topw\right)$ if $\abs{y[0]}>r$. However, $\abs{y[0]}<r$ if and only if $\abs{(x-x_0)[\lambda_0]}<r$ and  $\abs{y[0]}>r$ if and only if  $\abs{(x-x_0)[\lambda_0]}>r$. Thus, the proposition is proved.
\end{proof}

%\bibliographystyle{plain}
%\bibliography{refs}

\begin{thebibliography}{1}

\bibitem{barria-sham-18}
A.~Barr\'{i}a Comicheo and K.~Shamseddine.
\newblock Summary on non-archimedean valued fields.
\newblock {\em Contemp. Math.}, 704:1--36, 2018.

\bibitem{graftonthesis}
W.~Grafton.
\newblock On the convergence and analytic properties of power series on
  non-archimedean field extensions of the real numbers, Masters Thesis,
  University of Manitoba, Department of Mathematics, 2016.

\bibitem{rudin1}
W.~Rudin.
\newblock {\em Functional {A}nalysis}.
\newblock McGraw-Hill, New York, 1991.

\bibitem{rstopology09}
K.~Shamseddine.
\newblock On the topological structure of the {L}evi-{C}ivita field.
\newblock {\em J. Math. Anal. Appl.}, 368:281--292, 2010.

\bibitem{sham}
K.~Shamseddine.
\newblock A brief survey of the study of power series and analytic functions on
  the {L}evi-{C}ivita fields.
\newblock {\em Contemp. Math.}, 596:269--280, 2013.

\bibitem{rsrevitaly13}
K.~Shamseddine.
\newblock Analysis on the {L}evi-{C}ivita field and computational applications.
\newblock {\em Journal of Applied Mathematics and Computation}, 255:44--57,
  2015.

\bibitem{shamberz}
K.~Shamseddine and M.~Berz.
\newblock Convergence on the {L}evi-{C}ivita field and study of power series.
\newblock In {\em Proc. Sixth International Conference on $p$-adic Functional
  Analysis}, pages 283--299, New York, NY, 2000. Marcel Dekker.

\end{thebibliography}
%\begin{thebibliography}{99}
%		\bibitem{graftonthesis}
%			W. Grafton,
%			\emph{On the Convergence and Analytic Properties of Power Series on non-Archimedean Field Extensions of the Real Numbers},
%			Masters Thesis, University of Manitoba, Department of Mathematics, 2016.
%
%		\bibitem{rudin1}
%			W. Rudin,
%			\emph{Functional Analysis},
%			International series in pure and applied mathematics, McGraw-Hill, 1991.
			
%		\bibitem{shamberz}
%			K. Shamseddine \& M. Berz,
%			\emph{Convergence on the Levi-Civita Field and Study of Power Series},
%			Lecture Notes in Pure and Applied Mathematics, Marcel Dekker, Proceedings of the Sixth International Conference on P-adic Analysis, july 2-9, 2000, pp 283-299.
%			
%		\bibitem{sham}
%			K. Shamseddine,
%			\emph{A brief survey of the study of power series and analytic functions on the Levi-Civita fields},
%			Contemporary Mathematics Vol. 596 pg269-279, American Mathematical Society, 2013.	
%\end{thebibliography}
\end{document}